\tikzset{every loop/.style={min distance=10mm,looseness=10}}
\theoremstyle{plain}
\newtheorem{theorem}{Theorem}[section]
\newtheorem{proposition}[theorem]{Proposition}
\newtheorem{corollary}[theorem]{Corollary}
\newtheorem{lthm}{Theorem} 
\def\th@remark{%
  \thm@headfont{\bfseries}%
  \normalfont 
  \thm@preskip\topsep \divide\thm@preskip\tw@
  \thm@postskip\thm@preskip
}
\theoremstyle{remark} 
\newtheorem{remark}[theorem]{Remark}
\newtheorem{Ex}[theorem]{Example}
\theoremstyle{definition}
\newtheorem{definition}[theorem]{Definition}
\definecolor{Green}{rgb}{0.0, 0.5, 0.0}
\definecolor{Blue}{rgb}{0.0,0.0,1.0}
\newcommand{\bz}{\mathbf{z}}
\newcommand{\QQ}{\mathbb{Q}}
\newcommand{\ZZ}{\mathbb{Z}}
\newcommand{\Zl}{\ZZ_\ell}
\newcommand{\ord}{\mathrm{ord}}
\begin{document} 
\title[The non-$\ell$-part of the number of spanning trees in abelian $\ell$-towers of multigraphs]{The non-$\ell$-part of the number of spanning trees in abelian $\ell$-towers of multigraphs}

\author{Antonio Lei, Daniel Valli\`{e}res} 


\address{Antonio Lei\newline Department of Mathematics and Statistics\\University of Ottawa\\
150 Louis-Pasteur Pvt\\
Ottawa, ON\\
Canada K1N 6N5}
\email{antonio.lei@uottawa.ca}

\address{Daniel Valli\`{e}res\newline Mathematics and Statistics Department, California State University, Chico, CA 95929 USA}
\email{dvallieres@csuchico.edu}

\subjclass[2020]{Primary: 05C25; Secondary: 11R18, 11R23, 11Z05} 
\date{\today} 
\begin{abstract}
Let $\ell$ and $p$ be two distinct primes. We study the $p$-adic valuation of the number of spanning trees in an abelian $\ell$-tower of connected multigraphs. This is analogous to the classical theorem of Washington--Sinnott on the growth of the $p$-part of the class group in a cyclotomic $\Zl$-extension of abelian extensions of $\QQ$. Furthermore, we show that under certain hypotheses, the number of primes dividing the number of spanning trees is unbounded in such a tower.
\end{abstract} 
\maketitle 
\tableofcontents 
\addtocontents{toc}{\protect\setcounter{tocdepth}{1}}

\section{Introduction}
Let $\ell$ be a rational prime and let $K$ be a number field.  Consider a $\mathbb{Z}_{\ell}$-extension of $K$ and its associated tower of number fields
$$K = K_{0} \subseteq K_{1} \subseteq K_{2} \subseteq \ldots \subseteq K_{n} \subseteq \ldots  $$
meaning that $K_n/K$ is a Galois extension with Galois group isomorphic to $\ZZ/\ell^n\ZZ$. A classical theorem of Iwasawa (see \cite{Iwasawa:1959} and \cite{Iwasawa:1973}) predicts the existence of non-negative integers $\mu_{\ell}$, $\lambda_{\ell}$ and an integer $\nu_{\ell}$ such that
$${\rm ord}_{\ell}(h_{n}) = \mu_{\ell} \cdot \ell^{n} + \lambda_{\ell} \cdot n + \nu_{\ell} $$
for $n$ large enough, where $h_{n}$ is the class number of $K_{n}$, and ${\rm ord}_{\ell}$ denotes the usual $\ell$-adic valuation on $\mathbb{Q}$.  Let $p$ be another rational prime different from $\ell$.  In \cite{Washington:1975}, Washington initiated the study of the $p$-adic valuation of $h_{n}$ in  towers as above, and in \cite{Washington:1978}, he proved that in the situation where $K$ is abelian over $\mathbb{Q}$, and the $\mathbb{Z}_{\ell}$-extension is the cyclotomic one, there exists an integer $\nu_{p}$ such that one has
$${\rm ord}_{p}(h_{n}) = \nu_{p} $$
for $n$ large enough.  Sinnott gave a different proof of Washington's result in \cite{Sinnott:1987}.

In \cite{Vallieres:2021}, the second-named author introduced the notion of an abelian $\ell$-tower of connected multigraphs, which can be viewed as  analogues of $\mathbb{Z}_{\ell}$-extensions of number fields.  If $X$ is a connected multigraph, such a tower consists of a sequence of covers of connected multigraphs
\begin{equation} \label{gen_tower}
X = X_{0} \longleftarrow X_{1} \longleftarrow X_{2} \longleftarrow \ldots \longleftarrow X_{n} \longleftarrow \ldots
\end{equation}
for which the cover $X_{n}/X$ obtained by composing the covers $X_{n} \longrightarrow X_{n-1} \longrightarrow \ldots \longrightarrow X$ is Galois with group of covering transformations isomorphic to $\mathbb{Z}/\ell^{n}\mathbb{Z}$.  In \cite{Vallieres:2021}, \cite{McGown/Vallieres:2022}, and \cite{McGown/Vallieres:2022a}, it was shown that the $\ell$-adic valuation of the number of spanning trees of $X_{n}$ behaves just as in Iwasawa's theorem.  More precisely, there exist non-negative integers $\mu_{\ell}$, $\lambda_{\ell}$ and an integer $\nu_{\ell}$ such that
$${\rm ord}_{\ell}(\kappa_{n}) = \mu_{\ell}\cdot \ell^{n} + \lambda_{\ell}\cdot n + \nu_{\ell} $$
for $n$ large enough, where $\kappa_{n}$ is the number of spanning trees of $X_{n}$.  See also \cite{Gonet:2021} and \cite{Gonet:2022} for similar results proved independently using different methods.  The goal of the current paper is to study the $p$-adic valuation of $\kappa_{n}$ in an abelian $\ell$-tower of connected multigraphs for $p\neq \ell$, by analogy with the Washington-Sinnott theorem on cyclotomic $\Zl$-extensions of abelian number fields. In particular, we prove the following:

\begin{lthm}[{Theorem~\ref{main_1}}]\label{thmA}
Let $\ell$ and $p$ be two distinct prime numbers.  Let $X$ be a connected multigraph and $$X = X_{0} \longleftarrow X_{1} \longleftarrow X_{2} \longleftarrow \ldots \longleftarrow X_{n} \longleftarrow \ldots$$
 an abelian $\ell$-tower of connected multigraphs. There exist non-negative integers $n_0$, $\mu_p$ and an integer $\nu_p$ such that
 \[
 \ord_p(\kappa_n)=\mu_p\cdot \ell^n+\nu_p,
 \]
 for all $n\ge n_0$, where $\kappa_n$ denotes the number of spanning trees of $X_n$.
\end{lthm}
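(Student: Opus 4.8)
\section*{Proof proposal}

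The plan is to reduce the computation of $\ord_p(\kappa_n)$ to a product over the characters of the Galois group $G_n\cong\ZZ/\ell^n\ZZ$, and then to analyze that product $p$-adically in the spirit of Sinnott's proof of Washington's theorem. By the Matrix-Tree theorem applied to the cover $X_n/X$, together with the representation-theoretic block-diagonalization of the Laplacian of $X_n$ developed in the works cited in the introduction, one has
\[
\kappa_n=\frac{\kappa_0}{\ell^n}\prod_{\chi\neq\chi_0}\det L(\chi),
\]
where $\chi$ runs over the nontrivial characters of $G_n$, $\chi_0$ is the trivial character, and $L(\chi)$ is the $\chi$-twisted Laplacian, a square matrix of size $|V(X)|$. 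Since the tower arises from a fixed voltage assignment valued in the procyclic group, there is a \emph{single} Laurent polynomial $h(t)=\det M(t)\in\ZZ[t,t^{-1}]$, obtained by replacing each voltage $g$ by $t^{g}$, governing the whole tower via $\det L(\chi)=h(\chi(\sigma))$ for a fixed generator $\sigma$ of $G_n$. As $X$ is connected, $t=1$ is a simple zero of $h$, so I write $h(t)=(t-1)g(t)$ with $g(1)\neq 0$.

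First I would observe that, because $p\neq\ell$, the factors coming from $\ell^{-n}$ and from $(t-1)$ are invisible to $\ord_p$: for a primitive $\ell^{i}$-th root of unity $\zeta$ with $i\ge 1$ one has $N_{\QQ(\zeta)/\QQ}(\zeta-1)=\pm\ell$, so $\ord_p(\zeta-1)=0$. This reduces the problem to
\[
\ord_p(\kappa_n)=\ord_p(\kappa_0)+\sum_{i=1}^{n}\ord_p(N_i),
\]
where $N_i$ denotes the product of $g(\zeta)$ over the primitive $\ell^{i}$-th roots of unity, so that up to sign $N_i=N_{\QQ(\zeta_{\ell^i})/\QQ}\big(g(\zeta_{\ell^i})\big)\in\ZZ$. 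It therefore suffices to show that $\ord_p(N_i)=\mu_p\cdot\phi(\ell^i)$ for all $i$ large enough, since then the telescoping identity $\sum_{i>n_0}\phi(\ell^i)=\ell^{n}-\ell^{n_0}$ yields the desired shape $\mu_p\ell^{n}+\nu_p$, with the non-negativity of $\mu_p$ forced by $\ord_p(N_i)\ge 0$.

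The heart of the argument is the $p$-adic analysis of $N_i$. Fixing embeddings, I factor $g(t)=c\prod_k(t-\beta_k)$ over $\overline{\Qp}$ and sum the valuations $\ord_p(\zeta-\beta_k)$ over the $\phi(\ell^i)$ primitive $\ell^{i}$-th roots of unity $\zeta$, each of which is a $p$-adic unit. Three cases arise according to the valuation of a root $\beta_k$: if $\ord_p(\beta_k)<0$ then $\ord_p(\zeta-\beta_k)=\ord_p(\beta_k)$ for every $\zeta$; if $\ord_p(\beta_k)>0$ then $\ord_p(\zeta-\beta_k)=0$; and if $\beta_k$ is a unit then $\ord_p(\zeta-\beta_k)>0$ exactly when $\zeta\equiv\beta_k$ modulo the maximal ideal, that is, when the reduction $\bar\beta_k$ equals the primitive $\ell^{i}$-th root $\bar\zeta$ in $\overline{\Fp}$. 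Together with the contribution $\phi(\ell^i)\ord_p(c)$ of the leading coefficient, this gives $\ord_p(N_i)=\phi(\ell^i)\mu_p$ plus a unit-root correction, where $\mu_p=\ord_p(c)+\sum_{\ord_p(\beta_k)<0}\ord_p(\beta_k)$.

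The main obstacle, and the place where the hypothesis $p\neq\ell$ is essential, is controlling the unit roots $\beta_k$. The key point is that each reduction $\bar\beta_k\in\overline{\Fp}$ is a fixed element of finite order, and that reduction induces an isomorphism on prime-to-$p$ roots of unity; hence a primitive $\ell^{i}$-th root $\zeta$ can satisfy $\bar\zeta=\bar\beta_k$ only when $\bar\beta_k$ itself has order exactly $\ell^{i}$, which occurs for at most one index $i$. Consequently, for all $i$ larger than some $n_0$ the unit roots contribute nothing, leaving $\ord_p(N_i)=\phi(\ell^i)\mu_p$ precisely; the finitely many exceptional contributions (from small indices, or from an index $i$ at which some $\bar\beta_k$ has order exactly $\ell^{i}$) are bounded and are absorbed into the constant $\nu_p$. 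Assembling these pieces through the telescoping sum gives the stated formula.
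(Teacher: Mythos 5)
Your reduction of $\ord_p(\kappa_n)$ to the values of a single determinant polynomial at primitive $\ell^i$-th roots of unity is exactly the paper's starting point, and your case analysis of $\ord_p(\zeta-\beta_k)$ according to the valuation of each root (a Newton-polygon/Gauss-lemma computation identifying $\mu_p$ with the $p$-valuation of the content, plus the observation that a unit root can reduce to a primitive $\ell^i$-th root of unity for at most one $i$) is a correct and self-contained argument. However, it only applies when $\det M(T)$ is an honest Laurent polynomial, i.e.\ when the voltage assignment takes values in $\ZZ$. For a general abelian $\ell$-tower the voltages live in $\ZZ_\ell = \varprojlim \ZZ/\ell^n\ZZ$, and may be genuinely non-integral (the paper's last example has $\alpha(s_1)=\sqrt{17}\in\ZZ_2$). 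Then $f(T)=\det M(T)$ is a \emph{generalized} polynomial in $\ZZ[T;\ZZ_\ell]$: evaluation at $\ell$-power roots of unity still makes sense, but there is no factorization $c\prod_k(T-\beta_k)$ over $\overline{\QQ}_p$, so the heart of your argument has nothing to act on. One cannot rescue it by truncating the exponents modulo $\ell^n$, because the resulting Laurent polynomial, and hence its multiset of roots $\beta_k$, changes with $n$, destroying the ``at most one $i$'' argument. This is precisely the gap that the paper fills by invoking Sinnott's theorem on zeros of elements of $\FF_p[T;\ZZ_\ell]$ in $\mu_{\ell^\infty}(k_\infty)$; your proposal in effect reproves Sinnott's result in the special integer-exponent case, which is the setting the paper reserves for its Theorem~B.

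Two smaller points. First, your claim that $t=1$ is a \emph{simple} zero of $h$ is false: since $M(1/T)$ is the transpose of $M(T)$, the polynomial $T^b f(T)$ is palindromic, which forces $1$ to be a zero of even (hence at least double) multiplicity; this is harmless for your argument since $\ord_p(\zeta-1)=0$ for $p\neq\ell$, but the statement as written is wrong. Second, you should justify that every abelian $\ell$-tower arises from a single compatible voltage assignment on the base (the paper sketches this via $H_1(X,\ZZ)\to\varprojlim\Gamma_n\simeq\ZZ_\ell$); your phrase ``a fixed voltage assignment valued in the procyclic group'' implicitly concedes that the target is $\ZZ_\ell$, which is exactly what prevents $h$ from being a Laurent polynomial in general.
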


The main idea of the proof of Theorem~\ref{thmA} is to relate $\kappa_n$ to the value of a certain generalized polynomial  $f(T)$ at $\ell$-power roots of unity. This allows us to make use of a result of Sinnott from \cite{Sinnott:1987} to study the $p$-adic valuations of these values.  Under an additional hypothesis on the aforementioned generalized polynomial $f(T)$, we employ a result of Schinzel \cite{Schinzel:1974} to give a sufficient and necessary condition for the number of primes dividing $\kappa_n$ to be unbounded. 

\begin{lthm}[{Theorem~\ref{number_of_primes}}]\label{thmB}
Under the same notation as Theorem~\ref{thmA}, suppose that all exponents of $f(T)$ are in $\ZZ$. The number of primes dividing $\kappa_n$ is unbounded as $n\rightarrow\infty$  if and only if $f(T)$ admits a root $\alpha$ in $\overline{\mathbb{Q}}$ which is not a root of unity.
\end{lthm}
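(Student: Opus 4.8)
The plan is to reduce the statement to a question about the prime divisors of the values of $f$ at $\ell$-power roots of unity, and then to treat the two directions separately, using elementary periodicity on one side and Schinzel's primitive divisor theorem on the other. The proof of Theorem~\ref{thmA} expresses $\kappa_n$, up to sign and a power of $\ell$, as the product $\prod_{\zeta} f(\zeta)$ over the nontrivial $\ell^n$-th roots of unity $\zeta\ne 1$. Under the hypothesis that all exponents of $f(T)$ lie in $\ZZ$, after multiplying by a suitable monomial $T^{r}$ (which alters each $f(\zeta)$ only by the root of unity $\zeta^{r}$) we may assume $f(T)=a\prod_{i=1}^{d}(T-\alpha_i)\in\ZZ[T]$, with $a\in\ZZ$ and nonzero roots $\alpha_1,\dots,\alpha_d\in\overline{\QQ}$. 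Using $\prod_{\zeta^{N}=1}(\zeta-\alpha)=(-1)^{N}(\alpha^{N}-1)$ with $N=\ell^n$, I would rewrite $\prod_{\zeta^{\ell^n}=1}f(\zeta)=\pm\,a^{\ell^n}\prod_{i=1}^{d}(\alpha_i^{\ell^n}-1)$. Since $X_n$ is connected we have $\kappa_n\neq 0$, so no $\alpha_i$ is an $\ell$-power root of unity and every factor $\alpha_i^{\ell^n}-1$ is nonzero. The conclusion of this first step is that the symmetric difference between the set of rational primes dividing $\kappa_n$ and the set dividing the rational integer $\prod_{i=1}^{d}(\alpha_i^{\ell^n}-1)$ is contained in a \emph{fixed} finite set $S$ of primes, independent of $n$, consisting of $\ell$ together with the primes dividing $a$, $f(1)$, and $\kappa_0$. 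Hence it suffices to decide whether the number of primes dividing $\prod_{i}(\alpha_i^{\ell^n}-1)$ is unbounded.

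For the contrapositive of the ``only if'' assertion, suppose each $\alpha_i$ is a root of unity of order $m_i$. Then $\alpha_i^{\ell^n}$ depends only on $\ell^n \bmod m_i$, which is eventually periodic in $n$; consequently the nonzero rational number $\prod_{i}(\alpha_i^{\ell^n}-1)$ takes only finitely many values as $n$ varies. A finite set of nonzero rationals has only finitely many prime divisors in total, so the number of primes dividing $\kappa_n$ stays bounded.

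For the converse, fix a root $\alpha=\alpha_{i_0}$ that is not a root of unity and work in $L=\QQ(\alpha)$ with $D=[L:\QQ]$. Here I would invoke Schinzel's theorem from \cite{Schinzel:1974} with $A=\alpha$ and $B=1$ (whose hypothesis $A/B=\alpha$ not a root of unity holds): for all sufficiently large $m$, the element $\alpha^{\ell^m}-1$ admits a primitive prime divisor, i.e. a prime ideal $\mathfrak{p}_m$ of $\cO_L$ dividing $\alpha^{\ell^m}-1$ but not $\alpha^{k}-1$ for any $0<k<\ell^m$. The multiplicative order of $\alpha$ modulo $\mathfrak{p}_m$ is then exactly $\ell^m$, whence $\ell^m\mid p_m^{g_m}-1$, where $p_m$ is the rational prime below $\mathfrak{p}_m$ and $g_m\le D$ is its residue degree; this forces $p_m> (\ell^m)^{1/D}\to\infty$, so the $p_m$ form an infinite set of distinct rational primes. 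Moreover $\alpha^{\ell^m}\equiv 1 \pmod{\mathfrak{p}_m}$ implies $\alpha^{\ell^n}\equiv 1\pmod{\mathfrak{p}_m}$ for every $n\ge m$, so $p_m\mid N_{L/\QQ}(\alpha^{\ell^n}-1)$ and hence $p_m\mid \prod_i(\alpha_i^{\ell^n}-1)$. For $n$ large, all of $p_1,\dots,p_n$ that lie outside the fixed set $S$ therefore divide $\kappa_n$, and their number tends to infinity with $n$, proving the ``if'' direction.

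Setting aside the routine bookkeeping of the first paragraph, I expect the main obstacle to be this last step: converting the \emph{prime-ideal} primitive divisors supplied by Schinzel's theorem into \emph{unboundedly many distinct rational primes} dividing a single $\kappa_n$. The order-of-$\alpha$ computation showing $p_m\to\infty$ is exactly what rules out the possibility that all these prime ideals collapse onto finitely many rational primes. One must also verify that Schinzel's hypotheses are met along the sparse sequence of exponents $\ell^m$ (in particular that a non-root-of-unity $\alpha$ does yield primitive divisors for all large $m$), and confirm that the fixed exceptional primes in $S$ coming from $\ell$, $a$, $f(1)$, and $\kappa_0$ cannot absorb the growth.
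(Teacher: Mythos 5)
Your overall strategy is the same as the paper's: reduce the question to the prime divisors of the values of $f$ at $\ell$-power roots of unity, use periodicity of $\alpha_i^{\ell^n}$ when every root is a root of unity, and use Schinzel's primitive divisor theorem otherwise. But the step where you ``invoke Schinzel's theorem with $A=\alpha$ and $B=1$'' has a genuine gap: Schinzel's theorem applies to a pair of \emph{algebraic integers} $A,B$ with $(A,B)=1$, and a root $\alpha$ of $U(T)$ need not be an algebraic integer, since $U$ is in general not monic. (In Example~\ref{ex:unbounded2}, for instance, the non-cyclotomic factor is $3+7T+3T^2$, whose roots $(-7\pm\sqrt{13})/6$ are not integral, so the pair $A=\alpha$, $B=1$ is not admissible.) The missing idea is to write $\alpha=\beta/\gamma$ as a quotient of two \emph{coprime} algebraic integers; this may be impossible in $\QQ(\alpha)$ itself because the ideal $(\beta,\gamma)$ need not be principal, so one passes to a finite extension $L$ (e.g.\ the Hilbert class field) where it becomes principal and applies Schinzel to $\beta^{\ell^m}-\gamma^{\ell^m}$. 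The coprimality $(\beta,\gamma)=1$ is also what guarantees that a prime ideal dividing $\beta^{\ell^m}-\gamma^{\ell^m}$ is prime to $\beta\gamma$, so that $\alpha$ has a well-defined nonzero reduction lying in $\mu_{\ell^\infty}(\overline{\mathbb{F}}_p)$. Once this is in place, your observation that the multiplicative order of $\alpha$ modulo a primitive divisor of the $\ell^m$-th term forces the residue characteristic to grow is a perfectly good way to extract unboundedly many rational primes (the paper instead bounds the number of prime ideals above a fixed $p$ by $[L:\QQ]$), but as written the ``if'' direction does not go through.

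A second, smaller but concrete, problem is in your ``routine bookkeeping'': since $M(1)$ is the Laplacian of $X$, one always has $f(1)=0$, so your identity $\prod_{\zeta^{\ell^n}=1}f(\zeta)=\pm a^{\ell^n}\prod_i(\alpha_i^{\ell^n}-1)$ reads $0=0$, the exceptional set ``primes dividing $f(1)$'' is all primes, and your claim that no $\alpha_i$ is an $\ell$-power root of unity fails for $\alpha_i=1$. You must first split off the factor $(T-1)^m$ from $U(T)$ --- it contributes only powers of $\ell$ to $\prod_{\zeta\neq 1}f(\zeta)$ because $\prod_{\zeta^{\ell^n}=1,\;\zeta\neq 1}(\zeta-1)=\pm\ell^{n}$ --- and run the argument on the palindromic quotient $U_1(T)=U(T)/(T-1)^m$, none of whose roots is a primitive $\ell^{i}$-th root of unity for any $i\ge 0$ (for $i\ge 1$ this follows from connectedness of the $X_n$, as you note). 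Both issues are repairable, and the repaired argument coincides with the paper's proof.
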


A similar result in the setting of ideal class groups has been proved by Washington. See \cite[Corollary~3]{Washington:1975}, which says that if $K$ is an  abelian CM field, then the number of primes dividing $h_n^-$ tends to infinity as $n\rightarrow\infty$, where $h_n^-$ is the relative class number of the $n$-th layer of the cyclotomic $\Zl$-extension of $K$.

\begin{remark}
Let $L$ be an $r$-component link in the 3-sphere $S^3$ and write $X$ for the complement of an open tubular neighbourhood of $L$. Let $G_{L}$ be the link group of $L$ and let $[m_{i}] \in G_{L}$ be the class of the meridian $m_{i}$.  We have an isomorphism $H_{1}(X,\mathbb{Z}) \simeq G_{L}/G_{L}'$ and we let $t_{i} \in H_{1}(X,\mathbb{Z})$ correspond to $[m_{i}]$ via this previous isomorphism.  Given an $r$-tuple  $\bz=(z_1,\ldots,z_r)\in\ZZ^r$ such that gcd$(z_1,\ldots,z_r)=1$, there exists a $\Zl$-cover $X_\bz=(X_{\bz,\ell^n})_{n\ge0}$, meaning that $X_{\bz,\ell^n}\rightarrow X$ is an $\ell^n$-fold cyclic cover for all $n$, constructed from the surjective morphism $H_{1}(X,\mathbb{Z}) \longrightarrow \mathbb{Z}$ given by $t_{i} \mapsto z_{i}$. Let $M_{\bz,\ell^n}\rightarrow S^3$ be the Fox completion of $X_{\bz,\ell^n}\rightarrow X$. Then, \cite[Theorem~2.1(1)]{KM-link} tells us that for all $n\ge v:=\max\{\ord_{\ell}(z_i):1\le i\le r\}$, we have
\begin{equation}
\left|H_1(M_{\bz,\ell^n};\ZZ)\right|=\left|H_1(M_{\bz,\ell^v};\ZZ)\right|\cdot\left|\prod_{\substack{\zeta^{\ell^n}=1\\\zeta^{\ell^v}\neq1}}\Delta_{L,\bz}(\zeta)\right|,
\label{eq:formula-link}
\end{equation}
where $\Delta_{L,\bz}(t)$ is the reduced Alexander polynomial attached to $X_\bz$. It is of the form $(t-1)\Delta_L(t^{z_1},\ldots,t^{z_r})$  if $r\ge 2$ and of the form $\Delta_{L}(t)$ if $r =1$, where $\Delta_L$ is the Alexander polynomial of $L$. An asymptotic formula for $\ord_\ell\left(\left|H_1(M_{\bz,\ell^n};\ZZ)\right|\right)$ has been obtained by Hillman--Matei--Morishita (see \cite[Theorem~5.1.7]{HMM-link} or \cite[Theorem~2.1(2)]{KM-link}). Since the formula \eqref{eq:formula-link} is essentially of the same form as the one for $\kappa_n$ used in the proof of Theorem~\ref{thmA}, we expect that our method can be readily generalized  to deduce an asymptotic formula for $\ord_p\left(\left|H_1(M_{\bz,\ell^n};\ZZ)\right|\right)$ for $n\gg0$, where $p$ is a prime number not equal to $\ell$.

In a recent preprint \cite{CR-link}, Dion and Ray studied the statistical behaviour of the Iwasawa invariants  of certain covers of the $3$-sphere that are branched along a link. It seems reasonable to expect that some of their techniques can be generalized to study similar questions in the context of Iwasawa theory of coverings of graphs.  
\end{remark}

The paper is organized as follows.  After presenting a few preliminaries in \S\ref{Sec:main_0}, we prove Theorem~\ref{thmA} in \S\ref{Sec:main_1} and deduce some immediate consequences on the $\kappa_n$. We then prove Theorem~\ref{thmB} in \S\ref{Sec:main_2}. Finally, we illustrate our results with a few explicit examples in \S\ref{Sec:Ex}.

\subsection*{Acknowledgement}
The authors would like to thank Cédric Dion and Anwesh Ray for interesting discussions on topics related to the present article. DV would like to thank the pure mathematics group at the California State University - Chico including John Lind, Thomas Mattman, and Kevin McGown for several stimulating discussions during our seminar.  DV would also like to thank AL for inviting him to give a talk at the Universit\'{e} Laval on related topics from which the current paper is a result of discussions that followed.  AL's research is supported by the NSERC Discovery Grants Program RGPIN-2020-04259 and RGPAS-2020-00096. AL would like to thank DV for introducing this beautiful subject to him.

\subsection{Preliminaries} \label{Sec:main_0}
We use the same notation and terminology for multigraphs as in \cite[\S$4$]{McGown/Vallieres:2022a}.  (Our main reference for multigraphs, covers of multigraphs, and related notions is \cite{Sunada:2013}.  For Artin-Ihara $L$-functions, our main reference is \cite{Terras:2011}.)  \emph{In particular, all multigraphs arising in this paper will be connected, finite and with no vertex of valency one.  Furthermore, we shall assume throughout that the Euler characteristic of our multigraphs is not zero.}  (This case is simple and can be treated separately.  See the discussion after \cite[Definition $4.1$]{Vallieres:2021}.)  If $Y/X$ is an abelian cover of connected multigraphs with group of covering transformations $G$ and $\psi$ is a character of $G$, then the corresponding Artin-Ihara $L$-function will be denoted by $L_{X}(u,\psi)$.  The three-term determinant formula for Artin-Ihara $L$-functions implies that
$$\frac{1}{L_{X}(u,\psi)} = (1-u^{2})^{-\chi(X)}\cdot h_{X}(u,\psi), $$
where $\chi(X)$ is the Euler characteristic of $X$ and
\begin{equation} \label{main_l}
h_X(u,\psi)=\det(I-A_\psi u +(D-I)u^2)\in\ZZ[\psi][u].
\end{equation}
Here, $A_\psi$ is the adjacency matrix of $X$ twisted by $\psi$, $D$ the valency matrix of $X$ and $\mathbb{Z}[\psi]$ is the ring of integers in the cyclotomic number field $\mathbb{Q}(\psi)$.  For more details, see \cite[\S$2.2$]{Vallieres:2021}.  Our starting point is   \cite[Equation $(7)$]{Vallieres:2021}:  if $Y/X$ is an abelian cover of multigraphs with Galois group of covering transformations $G$, then
\begin{equation} \label{eq}
|G| \cdot \kappa_{Y} = \kappa_{X} \prod_{\Psi \neq \Psi_{0}} h_{X}(1,\Psi),
\end{equation}
where $\kappa$ denotes the number of spanning trees of a multigraph, and the product is over all the non-trivial orbits of the ${\rm Gal}(\overline{\mathbb{Q}}/\mathbb{Q})$-set $\widehat{G} = {\rm Hom}_{\mathbb{Z}}(G,\mathbb{C}^{\times})$.  In this last equation and the forthcoming ones, if $\Omega$ is a finite collection of characters, we always set
$$h_{X}(s,\Omega)  = \prod_{\psi \in \Omega} h_{X}(s,\psi).$$  

Throughout this paper, we fix a rational prime $\ell$.  For $m \in \mathbb{Z}_{\ge 1}$, we set
$$\zeta_{m} = {\rm exp}\left(\frac{2 \pi i}{m}\right) \in \overline{\mathbb{Q}} \subseteq \mathbb{C},$$
and we let
$$\QQ(\zeta_{\ell^\infty}) = \mathbb{Q}(\{\zeta_{\ell^{i}} \, | \, i=1,2,\ldots \}).$$
From now on, let $p$ be a fixed rational prime different from $\ell$ and fix an embedding $\tau:\mathbb{Q}(\zeta_{\ell^{\infty}}) \hookrightarrow \overline{\mathbb{Q}}_{p}$.  We also let $\xi_{\ell^{i}} = \tau(\zeta_{\ell^{i}}) \in \overline{\mathbb{Q}}_{p}$ and we denote by $v_{p}$ the valuation on $\overline{\mathbb{Q}}_{p}$ that extends the discrete valuation ${\rm ord}_{p}$ on $\mathbb{Q}_{p}$.  For each $i \ge 1$, we let $\mathfrak{p}_{i}$ be the prime ideal in $\mathbb{Q}(\zeta_{\ell^{i}})$ lying above $p$ which is determined by the embedding $\tau$.  If $x \in \mathbb{Q}(\zeta_{\ell^{i}})$, then we have
\begin{equation} \label{val_relation}
v_{p}(\tau(x)) = {\rm ord}_{\mathfrak{p}_{i}}(x).
\end{equation}
We have an infinite tower of unramified extensions of local fields
$$\mathbb{Q}_{p} \subseteq \mathbb{Q}_{p}(\xi_{\ell}) \subseteq \mathbb{Q}_{p}(\xi_{\ell^{2}}) \subseteq \ldots \subseteq \mathbb{Q}_{p}(\xi_{\ell^{n}}) \subseteq \ldots$$
that are all included in 
$$\mathbb{Q}_{p}(\xi_{\ell^{\infty}}) := \bigcup_{i \ge 0} \mathbb{Q}_{p}(\xi_{\ell^{i}}).$$
The valuation $v_{p}$ on $\overline{\mathbb{Q}}_{p}$ restricted to $\mathbb{Q}_{p}(\xi_{\ell^{i}})$ is discrete for $i=0,1,2,\ldots,\infty$.  We let $O_{i}$ be the corresponding valuation ring and $\mathfrak{m}_{i}$ the unique maximal ideal of $O_{i}$.  We also let $k_{i} = O_{i}/\mathfrak{m}_{i}$ be the corresponding residue field.  We obtain a tower of finite fields
$$\mathbb{F}_{p} \subseteq k_{1} \subseteq k_{2} \subseteq \ldots \subseteq k_{n} \subseteq \ldots $$
that are all contained in $k_{\infty}$.

If $R$ is any unital commutative ring, then we let 
$$\mu_{\ell^{\infty}}(R) = \{\lambda \in R^{\times} \, | \, \lambda^{\ell^{n}} = 1 \text{ for some } n=0,1,2,\ldots\}.$$
We will now consider various elements in the group ring $R[\mathbb{Z}_{\ell}]$, but it will be convenient to think of elements in $R[\mathbb{Z}_{\ell}]$ as generalized polynomials as is done for instance in \cite{Gilmer:1984}.  Thus we introduce a formal variable $T$ and we consider $R[T;\mathbb{Z}_{\ell}]$ whose elements are polynomials in $T$ with exponents in $\mathbb{Z}_{\ell}$ and coefficients in $R$.  Elements of $R[T;\mathbb{Z}_{\ell}]$ are expressions of the form
$$P(T) = \sum_{a \in \mathbb{Z}_{\ell}}\lambda_{a}T^{a}, $$ 
where $\lambda_{a} \in R$ is zero for all but finitely many $a \in \mathbb{Z}_{\ell}$.  Generalized polynomials are added and multiplied just as regular polynomials are.  Note that $R[T] = R[T;\mathbb{Z}_{\ge 0}]$, $R[T,T^{-1}] = R[T;\mathbb{Z}]$, and $R[\mathbb{Z}_{\ell}]\simeq R[T;\mathbb{Z}_{\ell}]$ via $a \in \mathbb{Z}_{\ell}\mapsto T^{a} \in \mathbb{Z}[T;\mathbb{Z}_{\ell}]$.  Let now $S$ be another unital commutative ring and assume we are given a unital ring morphism $R \longrightarrow S$.  If $\xi \in \mu_{\ell^{\infty}}(S)$, then we have an $R$-algebra morphism ${\rm ev}_{\xi}:R[T;\mathbb{Z}_{\ell}] \longrightarrow S$ defined via
$$P(T) \mapsto {\rm ev}_{\xi}(P(T)) = P(\xi) := \sum_{a \in \mathbb{Z}_{\ell}}\lambda_{a}\xi^{a}. $$
Any generalized polynomial $P(T) \in R[T;\mathbb{Z}_{\ell}]$ induces a function $P:\mu_{\ell^{\infty}}(S) \longrightarrow S$ via 
\begin{equation*} 
\xi \mapsto {\rm ev}_{\xi}(P(T)) = P(\xi),
\end{equation*}
and in particular, one can talk about the zeros of a generalized polynomial in $\mu_{\ell^{\infty}}(S)$.  
\begin{definition}
Let $P(T) = \sum_{a \in \mathbb{Z}_{\ell}}\lambda_{a}T^{a} \in \mathbb{Z}[T;\mathbb{Z}_{\ell}]$, then we define
$$\mu(P(T)) =  {\rm min}\{{\rm ord}_{p}(\lambda_{a}) \, | \, a \in \mathbb{Z}_{\ell} \}.$$
\end{definition}
There is a projection map $\mathbb{Z}[T;\mathbb{Z}_{\ell}] \longrightarrow \mathbb{F}_{p}[T;\mathbb{Z}_{\ell}]$ defined via
$$P(T) = \sum_{a \in \mathbb{Z}_{\ell}}\lambda_{a}T^{a} \mapsto \bar{P}(T) = \sum_{a \in \mathbb{Z}_{\ell}}\bar{\lambda}_{a}T^{a},$$
where the bar denotes reduction modulo $p$.
\begin{theorem}[Sinnott] \label{Sinnott}
Let $P(T) \in \mathbb{Z}[T;\mathbb{Z}_{\ell}]$ be such that $\mu(P(T)) = 0$, then $\bar{P}(T) \in \mathbb{F}_{p}[T;\mathbb{Z}_{\ell}]$ has only finitely many zeros in $\mu_{\ell^{\infty}}(k_{\infty})$.
\end{theorem}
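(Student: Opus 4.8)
The plan is to reduce at once to a purely characteristic-$p$ statement and then argue by induction on the number of monomials. Since $\mu(P(T))=0$, at least one coefficient of $P(T)$ is a $p$-adic unit, so $\bar P(T)\neq 0$ in $\Fp[T;\Zl]$. It therefore suffices to prove the following stronger claim: \emph{for every $r\ge 1$ and every $\bar Q(T)=\sum_{i=1}^{r}c_iT^{a_i}\in\Fp[T;\Zl]$ with $c_i\in\Fp^{\times}$ and with $a_1,\dots,a_r\in\Zl$ pairwise distinct, $\bar Q$ has only finitely many zeros in $\mu_{\ell^\infty}(k_\infty)$.} The base case $r=1$ is immediate, as $\bar Q(\xi)=c_1\xi^{a_1}$ is a unit for every root of unity $\xi$. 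For the inductive step I would assume the claim for all generalized polynomials with fewer than $r$ monomials.

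The first step is a normalization that lets me assume the exponents are spread out modulo $\ell$. Let $t=\min_{i\neq j}v_\ell(a_i-a_j)$, where $v_\ell$ denotes the $\ell$-adic valuation on $\Zl$ (finite since the $a_i$ are distinct). If $t\ge 1$, then every $a_i\equiv a_1\pmod{\ell^{t}}$, so I may write $a_i=a_1+\ell^{t}a_i'$ with $a_i'\in\Zl$ pairwise distinct, and set $\bar Q'(T)=\sum_ic_iT^{a_i'}$. For any $\xi\in\mu_{\ell^\infty}(k_\infty)$ and $\eta:=\xi^{\ell^{t}}$ one has the identity $\bar Q(\xi)=\xi^{a_1}\,\bar Q'(\eta)$, so $\bar Q(\xi)=0$ if and only if $\bar Q'(\eta)=0$. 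Since $\xi\mapsto\xi^{\ell^{t}}$ maps $\mu_{\ell^{n}}$ onto $\mu_{\ell^{\,n-t}}$ with fibres of size $\ell^{t}$, a bound $\eta\in\mu_{\ell^{M}}$ on the zeros of $\bar Q'$ forces $\xi\in\mu_{\ell^{\,M+t}}$; hence finiteness for $\bar Q'$ implies finiteness for $\bar Q$. As $\min_{i\neq j}v_\ell(a_i'-a_j')=0$ by construction, I have reduced to the case $t=0$, i.e. to the situation where at least two of the residues $a_i\bmod\ell$ are distinct.

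Now assume $t=0$ and carry out the descent. For $\xi$ of exact order $\ell^{n}$ with $n\ge 1$, write each exponent as $a_i=b_i+\ell s_i$ with $b_i\in\{0,\dots,\ell-1\}$ and $s_i\in\Zl$, and set $\eta=\xi^{\ell}$, a root of unity of order $\ell^{\,n-1}$. Grouping monomials by the residue $b$ gives the identity
\[
\bar Q(\xi)=\sum_{b=0}^{\ell-1}\xi^{b}\,\bar Q_b(\eta),\qquad \bar Q_b(T)=\sum_{i\,:\,b_i=b}c_iT^{s_i},
\]
where each $\bar Q_b(\eta)$ lies in $k_{n-1}=\Fp(\eta)$. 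The crucial field-theoretic input is the standard fact that there is an $n_0$ with $[k_n:k_{n-1}]=\ell$ for all $n\ge n_0$; for such $n$ the minimal polynomial of a primitive $\xi$ over $k_{n-1}$ is $X^{\ell}-\eta$, so $1,\xi,\dots,\xi^{\ell-1}$ are linearly independent over $k_{n-1}$. Consequently, for $n\ge n_0$ the condition $\bar Q(\xi)=0$ forces $\bar Q_b(\eta)=0$ for every $b$. Because $t=0$, at least two of the $\bar Q_b$ are nonzero, and each such $\bar Q_b$ has at most $r-1$ monomials; if some nonzero $\bar Q_b$ is a single monomial it never vanishes, and otherwise the inductive hypothesis bounds its zeros by some $\mu_{\ell^{M}}$. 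In either case the admissible $\eta$ form a finite set, which bounds the order $\ell^{\,n-1}$ of $\eta$ and hence the level $n$. Thus primitive zeros occur only for finitely many $n$, and together with the finitely many zeros of order at most $\ell^{\,n_0-1}$ this proves the claim, and with it the theorem.

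I expect the genuine content to sit in two places. The reduction to the case $t=0$ exists precisely so that the mod-$\ell$ grouping produces polynomials with \emph{strictly} fewer monomials, which is what makes the induction well founded; without it the case in which all exponents are congruent modulo $\ell$ would reproduce the same number of monomials. The other essential ingredient is the stabilization $[k_n:k_{n-1}]=\ell$ for large $n$, since this is what converts the vanishing of the single sum $\bar Q(\xi)$ into the simultaneous vanishing of the $\bar Q_b(\eta)$. Verifying this stabilization together with the attendant minimal-polynomial computation, with the usual mild care when $\ell=2$, is the step I would treat most carefully.
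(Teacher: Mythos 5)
Your argument is correct, but it takes a genuinely different route from the paper, which offers no proof at all beyond citing Theorem~2.2 of Sinnott's 1987 paper; you have in effect supplied a self-contained, elementary proof of the input the paper treats as a black box. The two points on which your induction rests both check out. First, the normalization to $t=\min_{i\neq j}v_\ell(a_i-a_j)=0$ is exactly what guarantees that at least two residue classes $b_i\bmod\ell$ occur, so that every nonzero $\bar Q_b$ has strictly fewer than $r$ monomials and the induction is well founded; note also (worth saying explicitly) that within a fixed class $b$ the exponents $s_i$ are pairwise distinct, since otherwise the original $a_i$ would coincide, so each nonempty $\bar Q_b$ really is a nonzero generalized polynomial with unit coefficients to which the inductive hypothesis applies. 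Second, the stabilization $[k_n:k_{n-1}]=\ell$ for $n\ge n_0$ holds because $[k_n:\mathbb{F}_p]$ equals the multiplicative order of $p$ modulo $\ell^n$, which is eventually multiplied by exactly $\ell$ at each step (also for $\ell=2$); granted this, $X^\ell-\eta$ is a degree-$\ell$ polynomial over $k_{n-1}$ with root $\xi$ in a degree-$\ell$ extension, hence is the minimal polynomial, so $1,\xi,\dots,\xi^{\ell-1}$ form a $k_{n-1}$-basis and the single relation $\bar Q(\xi)=0$ does split into the relations $\bar Q_b(\eta)=0$. Compared with the citation route, your proof is effective in principle: it bounds, recursively in terms of $n_0$ and the zero sets of the $\bar Q_b$, the level beyond which primitive $\ell$-power roots of unity cannot be zeros, which is in the spirit of the paper's later remark that such quantitative information could be extracted from Sinnott's proof.
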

\begin{proof}
This follows directly from \cite[Theorem~2.2]{Sinnott:1987}.
\end{proof}

\section{Abelian $\ell$-towers of multigraphs}\label{Sec:main_1}
Recall from \cite[\S $4$]{McGown/Vallieres:2022a} that for any connected multigraph $X$, we have an incidence function
$${\rm inc}:\vec{E}_{X}\longrightarrow V_{X} \times V_{X},$$
where $\vec{E}_{X}$ denotes the directed edges and $V_{X}$ the vertices of $X$.  Let $\gamma:E_{X} \longrightarrow \vec{E}_{X}$ be a section of the natural map $\vec{E}_{X}\longrightarrow E_{X}$, where $E_{X}$ denotes the undirected edges.  Set also $S=\gamma(E_{X})$.   Further, let $\alpha:S \longrightarrow \mathbb{Z}_{\ell}$ be a function, and let $\alpha_{n}$ denote the function $S \longrightarrow \mathbb{Z}/\ell^{n}\mathbb{Z}$ given by the composition of $\alpha$ and the natural projection $\mathbb{Z}_{\ell} \longrightarrow \mathbb{Z}/\ell^{n}\mathbb{Z}$.  We assume that all the derived multigraphs $X(\mathbb{Z}/\ell^{n}\mathbb{Z},S,\alpha_{n})$ are connected.  In this case, we obtain an abelian $\ell$-tower of connected multigraphs
\begin{equation} \label{concrete_tower}
X  \longleftarrow X(\mathbb{Z}/\ell\mathbb{Z},S,\alpha_{1}) \longleftarrow X(\mathbb{Z}/\ell^{2}\mathbb{Z},S,\alpha_{2}) \longleftarrow \ldots \longleftarrow X(\mathbb{Z}/\ell^{n}\mathbb{Z},S,\alpha_{n}) \longleftarrow \ldots,
\end{equation}
as explained in \cite[\S$4$]{McGown/Vallieres:2022a}.  The group of covering transformations of the cover $X(\mathbb{Z}/\ell^{n}\mathbb{Z},S,\alpha_{n}) \longrightarrow X$ is isomorphic to $\mathbb{Z}/\ell^{n}\mathbb{Z}$, which we shall denote by $\Gamma_{n}$ from now on.  Label the vertices $V_{X} = \{v_{1},\ldots,v_{g} \}$, and consider the matrix
$$M(T) =  D - \left(\sum_{\substack{s \in S \\ {\rm inc}(s) = (v_{i},v_{j})}} T^{\alpha(s)} +  \sum_{\substack{s \in S \\ {\rm inc}(s) = (v_{j},v_{i})}} T^{-\alpha(s)} \right) \in M_{g \times g}(\mathbb{Z}[T;\mathbb{Z}_{\ell}]),$$
where $D$ is the valency matrix of $X$.

We now prove Theorem~\ref{thmA} stated in the introduction:
\begin{theorem} \label{main_1}
With the notation as above, define
$$f(T) = {\rm det}(M(T)) \in \mathbb{Z}[T;\mathbb{Z}_{\ell}]. $$
Let $\mu = \mu(f(T))$ and write
$$f(T) = p^{\mu}\cdot g(T),$$
for some $g(T) \in \mathbb{Z}[T;\mathbb{Z}_{\ell}]$ satisfying $\mu(g(T)) = 0$.  Let $n_{0}$ be the minimal positive integer (which exists by Theorem \ref{Sinnott}) such that
$$g(\xi) \not \equiv 0 \pmod{\mathfrak{m}_{\infty}}, $$
whenever $\xi$ is a primitive $\ell^{i}$-th root of unity with $i \ge n_{0}$.  Then, there exists an integer $\nu$ such that
$${\rm ord}_{p}(\kappa_{n}) = \mu \cdot \ell^{n} + \nu,$$
when $n \ge n_{0}$.
\end{theorem}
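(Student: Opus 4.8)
The plan is to evaluate the spanning-tree identity \eqref{eq} for the cover $X_{n}=X(\ZZ/\ell^{n}\ZZ,S,\alpha_{n})$ over $X$, whose group of covering transformations is $\Gamma_{n}\cong\ZZ/\ell^{n}\ZZ$. The non-trivial characters of $\Gamma_{n}$ are indexed by the non-trivial $\ell^{n}$-th roots of unity $\xi$, and by construction of the twisted adjacency matrix one has $M(\xi)=D-A_{\psi_{\xi}}$ for the character $\psi_{\xi}$ corresponding to $\xi$, so that $f(\xi)=\det M(\xi)=\det(D-A_{\psi_{\xi}})=h_{X}(1,\psi_{\xi})$ by \eqref{main_l} evaluated at $u=1$. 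Grouping the characters according to their exact order and using $h_{X}(1,\Psi)=\prod_{\psi\in\Psi}h_{X}(1,\psi)$, identity \eqref{eq} becomes
\begin{equation*}
\ell^{n}\cdot\kappa_{n}=\kappa_{0}\prod_{i=1}^{n}\prod_{\xi}f(\xi),
\end{equation*}
where the inner product runs over the primitive $\ell^{i}$-th roots of unity.

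First I would take $\ord_{p}$ of both sides. Since $p\neq\ell$, the factor $\ell^{n}$ contributes nothing. For fixed $i$, the inner product equals the norm $N_{\QQ(\zeta_{\ell^{i}})/\QQ}(f(\zeta_{\ell^{i}}))\in\QQ$, because $f$ has rational coefficients and the conjugates of $f(\zeta_{\ell^{i}})$ are exactly the $f(\xi)$ as $\xi$ runs over the primitive $\ell^{i}$-th roots. Applying the embedding $\tau$ and the valuation $v_{p}$, which restricts to $\ord_{p}$ on $\QQ$, and using that $\tau$ carries the family of conjugates to the family $\{f(\xi_{\ell^{i}}^{a})\}$, I obtain
\begin{equation*}
\ord_{p}\Bigl(\prod_{\xi}f(\xi)\Bigr)=\sum_{\xi}v_{p}(\tau(f(\xi))),
\end{equation*}
the sum again over primitive $\ell^{i}$-th roots. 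This is the step that converts a single rational valuation into a sum of local valuations of the individual values $f(\xi)$.

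Next I would write $f=p^{\mu}g$, so that $v_{p}(\tau(f(\xi)))=\mu+v_{p}(\tau(g(\xi)))$ for each $\xi$, there being $\phi(\ell^{i})$ primitive $\ell^{i}$-th roots. Summing over $1\le i\le n$ and using $\sum_{i=1}^{n}\phi(\ell^{i})=\ell^{n}-1$ yields the main term $\mu(\ell^{n}-1)$. The remaining contribution $\sum_{i=1}^{n}\sum_{\xi}v_{p}(\tau(g(\xi)))$ is governed by Theorem~\ref{Sinnott}: because $\mu(g)=0$, $\bar{g}$ has only finitely many zeros in $\mu_{\ell^{\infty}}(k_{\infty})$; as $p\neq\ell$, reduction modulo $\mathfrak{m}_{\infty}$ is injective on $\mu_{\ell^{\infty}}$, so only finitely many $\xi$ satisfy $g(\xi)\equiv0\pmod{\mathfrak{m}_{\infty}}$ and these have bounded order. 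This is precisely the assertion that $n_{0}$ exists, and for $i\ge n_{0}$ each term $v_{p}(\tau(g(\xi)))$ vanishes. Hence for $n\ge n_{0}$ the double sum stabilizes to the constant $C=\sum_{i=1}^{n_{0}-1}\sum_{\xi}v_{p}(\tau(g(\xi)))$.

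Collecting the pieces gives $\ord_{p}(\kappa_{n})=\mu\cdot\ell^{n}+\nu$ for $n\ge n_{0}$, with $\nu=\ord_{p}(\kappa_{0})-\mu+C$. I expect the main obstacle to be the reduction step linking Sinnott's theorem to the vanishing of $v_{p}(\tau(g(\xi)))$: one must verify that reduction modulo $\mathfrak{m}_{\infty}$ is injective on $\ell$-power roots of unity, which is valid because $p\neq\ell$, and which both guarantees the finiteness yielding $n_{0}$ and makes $g(\xi)\not\equiv0\pmod{\mathfrak{m}_{\infty}}$ equivalent to $v_{p}(\tau(g(\xi)))=0$. The identity $h_{X}(1,\psi_{\xi})=f(\xi)$ is the other ingredient, but it is essentially built into the definition of $M(T)$ and the twisted adjacency matrix.
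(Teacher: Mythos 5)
Your proposal is correct and follows essentially the same route as the paper: start from the identity $\ell^{n}\kappa_{n}=\kappa_{X}\prod_{i}h_{X}(1,\Psi_{i})$, identify $h_{X}(1,\psi)$ with $f(\xi)$ via $M(T)$, factor out $p^{\mu}$, and invoke Sinnott's theorem (Theorem \ref{Sinnott}) to show the contributions from $g$ vanish for $i\ge n_{0}$, the only cosmetic difference being that you phrase the orbit sum as a norm while the paper handles the conjugates via the substitution $f_{c}(T)=f(T^{c})$.
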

\begin{proof}
As explained in \cite[\S$5.1$]{Vallieres:2021}, the inflation property of the Artin-Ihara $L$-functions combined with (\ref{eq}) imply that for $n \ge 1$, we have
\begin{equation} \label{starting_point}
\ell^{n} \cdot \kappa_{n} = \kappa_{X} \cdot \prod_{i=1}^{n} h_{X}(1,\Psi_{i}), 
\end{equation}
where $\Psi_{i}$ consists of the faithful characters of $\Gamma_{i}$ and $\kappa_{n}$ is the number of spanning tree of $X(\mathbb{Z}/\ell^{n}\mathbb{Z},S,\alpha_{n})$.  Applying $\tau$ first, then $v_{p}$ to (\ref{starting_point}), and using (\ref{val_relation}), we obtain
\begin{equation*} 
{\rm ord}_{p}(\kappa_{n}) = C + \sum_{i=n_{0}}^{n} v_{p}(\tau(h_{X}(1,\Psi_{i}))) =  C + \sum_{i=n_{0}}^{n} {\rm ord}_{\mathfrak{p}_{i}}(h_{X}(1,\Psi_{i})), 
\end{equation*}
for some constant $C$ whenever $n \ge n_{0}$.   We let $\psi_{i}$ be the character of $\Gamma_{i}$ satisfying $\psi_{i}(\bar{1}) = \zeta_{\ell^{i}}$.  We then have
\begin{equation*}
\begin{aligned}
{\rm ord}_{\mathfrak{p}_{i}}(h_{X}(1,\Psi_{i})) &= \sum_{\psi \in \Psi_{i}}{\rm ord}_{\mathfrak{p}_{i}}(h_{X}(1,\psi)) \\
&= \sum_{\sigma \in (\mathbb{Z}/\ell^{i}\mathbb{Z})^{\times}}{\rm ord}_{\mathfrak{p}_{i}}(h_{X}(1,\psi_{i}^{\sigma})), \\
\end{aligned}
\end{equation*}
and we are left to understand ${\rm ord}_{\mathfrak{p}_{i}}(h_{X}(1,\psi_{i}^{\sigma}))$.  For each $\sigma \in (\mathbb{Z}/\ell^{i}\mathbb{Z})^{\times}$, there exists a positive integer $c$ relatively prime with $\ell$ such that
$${\rm ord}_{\mathfrak{p}_{i}}(h_{X}(1,\psi_{i}^{\sigma})) = {\rm ord}_{\mathfrak{p}_{i}}(h_{X}(1,\psi_{i}^{c})).$$
Since it follows from (\ref{main_l}) that
$$h_{X}(1,\psi) = {\rm det}(D - A_{\psi}),$$
\cite[Corollary $5.3$]{McGown/Vallieres:2022a} implies that
\begin{equation*}
\tau(h_{X}(1,\psi_{i}^{c})) = f(\xi_{\ell^{i}}^{c}).
\end{equation*}
From now on, we let $f_{c}(T) = f(T^{c}) \in \mathbb{Z}[T;\mathbb{Z}_{\ell}]$ and $g_{c}(T) = g(T^{c})$.  Clearly $\mu(f_{c}(T)) = \mu$, $f_{c}(T) = p^{\mu}\cdot g_{c}(T)$ and furthermore for a fixed positive integer $i$ the reduction of $g(T)$ modulo $p$ has a primitive $\ell^{i}$-th root of unity in $k_{\infty}$ as a root if and only if the reduction of $g_{c}(T)$ modulo $p$ has a primitive $\ell^{i}$-th root of unity in $k_{\infty}$ as a root.  Therefore, 
\begin{equation*}
v_{p}(f(\xi_{\ell^{i}}^{c})) = v_{p}(f_{c}(\xi_{\ell^{i}})) = \mu,
\end{equation*}
when $i \ge n_{0}$.  We obtain
\begin{equation*}
\begin{aligned}
{\rm ord}_{p}(\kappa_{n}) &= C + \sum_{i = n_{0}}^{n}v_{p}(\tau(h_{X}(1,\Psi_{i}))) \\ 
&= C + \sum_{i = n_{0}}^{n} \varphi(\ell^{i}) \cdot \mu,
\end{aligned}
\end{equation*}
and the result follows.
\end{proof}

Let us make a few remarks.  Firstly, if $\mu=0$ in Theorem \ref{main_1}, then ${\rm ord}_{p}(\kappa_{n})$ becomes constant for $n$ sufficiently large.  Secondly, we point out that Washington mentioned on page $192$ of \cite{Washington:1975} that ``it  is  reasonable  to  conjecture  that  $e_{n} = \mu p^{n} + \nu$ for  large  $n$,  for  suitable  constants $\mu$ and $\nu$''.  (In \cite{Washington:1975}, the roles of $\ell$ and $p$ are reversed, and $e_{n}$ denotes the $\ell$-adic valuation of the class number $h_{n}$ in a $\mathbb{Z}_{p}$-extension of number field.)  It is perhaps interesting to notice that this is exactly the behavior we obtained in Theorem \ref{main_1}.  At last, to complete the proof of Theorem~\ref{thmA}, let us briefly mention how every abelian $\ell$-tower as in (\ref{gen_tower}) is isomorphic (in a suitable sense) to one of the form (\ref{concrete_tower}) for some function $\alpha:S \longrightarrow \mathbb{Z}_{\ell}$.  Starting with an abelian $\ell$-tower as in (\ref{gen_tower}), one has group morphisms $\mu_{n}:H_{1}(X,\mathbb{Z}) \longrightarrow \Gamma_{n} = {\rm Gal}(X_{n}/X)$ for all $n \ge 1$, since $H_{1}(X,\mathbb{Z})$ classifies connected abelian covers of $X$.  Furthermore, these maps are compatible in the sense that $\pi_{n+1} \circ \mu_{n+1} = \mu_{n}$ for all $n \ge 1$, where $\pi_{n+1}:\Gamma_{n+1} \longrightarrow \Gamma_{n}$ is the natural projection map.  Therefore, we get a group morphism
$$\mu:H_{1}(X,\mathbb{Z}) \longrightarrow  \lim_{\substack{\longleftarrow \\ n \ge1 }}\Gamma_{n}\simeq \mathbb{Z}_{\ell}.$$
Let now $T$ be a spanning tree of $X$, and choose a section $S$ as explained at the beginning of \S\ref{Sec:main_1}.  For each $s \in S$, consider $c_{s}$, the unique geodesic path (meaning no backtracks) going from the end point of $s$ to the starting point of $s$ within $T$ followed by $s$, and let $\langle c_{s} \rangle$ be the corresponding cycle in $H_{1}(X,\mathbb{Z})$.  Define $\alpha:S \longrightarrow \mathbb{Z}_{\ell}$ via $s \mapsto \alpha(s) = \mu(\langle c_{s}\rangle)$.  Then, one can show that there exist isomorphisms of multigraphs 
$$\phi_{n}:X(\mathbb{Z}/\ell^{n}\mathbb{Z},S,\alpha_{n}) \stackrel{\simeq}{\longrightarrow} X_{n}$$
such that all the squares and the triangle in the diagram
\begin{equation*}
\begin{tikzcd}
         & \arrow[ld] X(\mathbb{Z}/\ell\mathbb{Z},S,\alpha_{1})  \arrow[dd, "\phi_{1}"]  & \arrow[l]X(\mathbb{Z}/\ell^{2}\mathbb{Z},S,\alpha_{2})   \arrow[dd, "\phi_{2}"] &\arrow[l] \ldots  &\arrow[l] \arrow[dd, "\phi_{n}"]  X(\mathbb{Z}/\ell^{n}\mathbb{Z},S,\alpha_{n}) & \arrow[l]  \ldots\\  
         X &  & & & & \\
          & \arrow[lu]X_{1}  & \arrow[l]X_{2}   & \arrow[l] \ldots  & \arrow[l] X_{n}  \arrow[l] & \arrow[l] \ldots
\end{tikzcd}
\end{equation*}
commute.  We leave the details to the reader.

\begin{corollary} \label{prime_showing}
With the same notation as in Theorem \ref{main_1}, if the reduction modulo $p$ of the generalized polynomial $g(T)$ has no roots in $\mu_{\ell^{\infty}}(k_{\infty}) \smallsetminus \{1\}$, then
$${\rm ord}_{p}(\kappa_{n}) = \mu \cdot \ell^{n} - \mu + {\rm ord}_{p}(\kappa_{X}),$$
for all $n \ge 1$. 
\end{corollary}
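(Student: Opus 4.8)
The plan is to specialize the argument already carried out in the proof of Theorem~\ref{main_1}; the only genuinely new input is that the hypothesis pins down the value of $n_0$ and, with it, the constant appearing there. First I would observe that the roots of $\bar g(T)$ in $\mu_{\ell^{\infty}}(k_{\infty}) \smallsetminus \{1\}$ are precisely the primitive $\ell^{i}$-th roots of unity (for $i \ge 1$) at which $\bar g$ vanishes. The assumption that there are none therefore forces $g(\xi) \not\equiv 0 \pmod{\mathfrak{m}_{\infty}}$ for every primitive $\ell^{i}$-th root of unity with $i \ge 1$, which in the language of Theorem~\ref{main_1} says exactly that $n_{0} = 1$ is admissible. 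Consequently the asymptotic formula there will hold for all $n \ge 1$, not merely for large $n$.

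Next I would retrace the chain of equalities in the proof of Theorem~\ref{main_1} with $n_{0} = 1$, keeping track of the constant explicitly rather than absorbing it into an anonymous $C$. Applying $\tau$ and then $v_{p}$ to \eqref{starting_point}, and using $\ord_{p}(\ell^{n}) = 0$ (valid since $p \neq \ell$), the leftover sum $\sum_{i=1}^{n_{0}-1}$ is now empty, so the constant collapses to $\ord_{p}(\kappa_{X})$ on the nose. As in the proof, for each $i \ge 1$ and each $\sigma \in (\mathbb{Z}/\ell^{i}\mathbb{Z})^{\times}$ one has $v_{p}(\tau(h_{X}(1,\psi_{i}^{\sigma}))) = \mu$, so that $v_{p}(\tau(h_{X}(1,\Psi_{i}))) = \varphi(\ell^{i}) \cdot \mu$. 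This yields
\[
\ord_{p}(\kappa_{n}) = \ord_{p}(\kappa_{X}) + \mu \sum_{i=1}^{n} \varphi(\ell^{i})
\]
for all $n \ge 1$.

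Finally I would evaluate the sum by telescoping: since $\varphi(\ell^{i}) = \ell^{i} - \ell^{i-1}$ for $i \ge 1$, we obtain $\sum_{i=1}^{n} \varphi(\ell^{i}) = \ell^{n} - 1$, and substituting gives $\ord_{p}(\kappa_{n}) = \mu \cdot \ell^{n} - \mu + \ord_{p}(\kappa_{X})$, as claimed. There is no serious obstacle beyond bookkeeping here; the single point requiring a little care is confirming that the hypothesis genuinely allows $n_{0} = 1$, so that the empty-sum convention makes the constant equal to $\ord_{p}(\kappa_{X})$ exactly and the identity is valid for every $n \ge 1$ rather than only asymptotically.
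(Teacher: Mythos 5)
Your proof is correct and follows the same route as the paper: the hypothesis forces $n_{0}=1$ in Theorem~\ref{main_1}, the constant is then identified as ${\rm ord}_{p}(\kappa_{X})$ (using $p\neq\ell$ to kill the $\ell^{n}$ factor), and the telescoping sum $\sum_{i=1}^{n}\varphi(\ell^{i})=\ell^{n}-1$ gives the stated formula. The paper's own proof is just a terser version of exactly this argument.
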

\begin{proof}
Indeed, in Theorem \ref{main_1}, we have $n_{0}=1$, therefore
$${\rm ord}_{p}(\kappa_{n}) = {\rm ord}_{p}(\kappa_{X}) + \sum_{i=1}^{n}\varphi(\ell^{i}) \cdot \mu = \mu \cdot \ell^{n} - \mu + {\rm ord}_{p}(\kappa_{X}), $$
as we wanted to show.
\end{proof}

\begin{corollary}\label{nec_suf_cond}
With the same notation as in Theorem \ref{main_1}, assume furthermore that $\mu = 0$.  Then $p \nmid \kappa_{n}$ for all $n \ge 1$ if and only if $p \nmid \kappa_{X}$ and the reduction modulo $p$ of the generalized polynomial $g(T)$ has no roots in $\mu_{\ell^{\infty}}(k_{\infty}) \smallsetminus \{1\}$.
\end{corollary}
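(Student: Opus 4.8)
The plan is to assume $\mu = 0$ throughout and then directly specialize the formula obtained in Theorem \ref{main_1}, treating the two directions of the biconditional separately. Since $\mu = 0$, we have $f(T) = g(T)$, so the condition ``the reduction modulo $p$ of $g(T)$ has no roots in $\mu_{\ell^\infty}(k_\infty) \smallsetminus \{1\}$'' is exactly the hypothesis of Corollary \ref{prime_showing}. The key observation is that under this hypothesis, Corollary \ref{prime_showing} with $\mu = 0$ gives $\mathrm{ord}_p(\kappa_n) = \mathrm{ord}_p(\kappa_X)$ for all $n \ge 1$, so the valuation is constant down the tower and equals the valuation at the base.

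For the ``if'' direction, I would suppose $p \nmid \kappa_X$ (equivalently $\mathrm{ord}_p(\kappa_X) = 0$) and that $\bar{g}(T)$ has no roots in $\mu_{\ell^\infty}(k_\infty) \smallsetminus \{1\}$. Then Corollary \ref{prime_showing} immediately yields $\mathrm{ord}_p(\kappa_n) = 0$ for all $n \ge 1$, which is precisely $p \nmid \kappa_n$ for all $n \ge 1$.

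For the ``only if'' direction, I would argue by contraposition and split into the two ways the right-hand condition can fail. First, if $p \mid \kappa_X$, then taking $n = 1$ in equation (\ref{starting_point}), namely $\ell \cdot \kappa_1 = \kappa_X \cdot h_X(1, \Psi_1)$, shows that $\kappa_X$ divides $\ell \cdot \kappa_1$; since $p \neq \ell$, it follows that $p \mid \kappa_1$, contradicting $p \nmid \kappa_n$ for all $n$. Second, suppose $\bar{g}(T)$ has a root $\xi$ in $\mu_{\ell^\infty}(k_\infty) \smallsetminus \{1\}$, so $\xi$ is a primitive $\ell^j$-th root of unity for some $j \ge 1$. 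I would then trace through the computation in the proof of Theorem \ref{main_1}: the vanishing $\bar{g}(\xi) = 0$ means that for the corresponding character $\psi_j$, one has $v_p(\tau(h_X(1, \psi_j))) > 0$, so this factor contributes a strictly positive amount to $\mathrm{ord}_p(\kappa_j)$ in the expansion $\mathrm{ord}_p(\kappa_n) = \mathrm{ord}_p(\kappa_X) + \sum_{i=1}^{n} \mathrm{ord}_{\mathfrak{p}_i}(h_X(1, \Psi_i))$. Since each summand is a valuation and hence nonnegative, and $\mathrm{ord}_p(\kappa_X) \ge 0$, the total $\mathrm{ord}_p(\kappa_j)$ is strictly positive, giving $p \mid \kappa_j$ and again contradicting the hypothesis.

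The main obstacle I anticipate is the bookkeeping in the ``only if'' direction: I must make sure that a single root $\xi$ of $\bar{g}$ genuinely forces a positive contribution that survives into $\mathrm{ord}_p(\kappa_n)$ for some concrete $n$, rather than being cancelled. Here the crucial point is that every term $\mathrm{ord}_{\mathfrak{p}_i}(h_X(1, \psi_i^\sigma))$ is a valuation of a nonzero algebraic number and is therefore nonnegative, so there is no cancellation to worry about—positivity of one summand is enough. I would also need to confirm that the Galois-orbit grouping used in Theorem \ref{main_1} (replacing $\psi_i^\sigma$ by $\psi_i^c$ via $f_c(T) = f(T^c)$) respects the root condition, which is exactly the stated equivalence in that proof: $\bar{g}$ has a primitive $\ell^i$-th root of unity as a root if and only if $\bar{g_c}$ does. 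This ensures that the existence of the root $\xi$ is detected uniformly across the orbit and the argument goes through.
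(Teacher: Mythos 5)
Your proposal is correct and follows essentially the same route as the paper: both rest on the identity $\mathrm{ord}_{p}(\kappa_{n}) = \mathrm{ord}_{p}(\kappa_{X}) + \sum_{i=1}^{n} v_{p}(\tau(h_{X}(1,\Psi_{i})))$, the nonnegativity of each summand (the $h_{X}(1,\psi)$ being algebraic integers), and the equivalence between some summand being strictly positive and $\bar{g}(T)$ having a root in $\mu_{\ell^{\infty}}(k_{\infty}) \smallsetminus \{1\}$. Your extra case-splitting in the ``only if'' direction and the appeal to Corollary \ref{prime_showing} for the ``if'' direction are just more explicit packagings of the same argument.
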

\begin{proof}
This follows from the formula
$${\rm ord}_{p}(\kappa_{n}) = {\rm ord}_{p}(\kappa_{X}) + \sum_{i=1}^{n_{0}} v_{p}(\tau(h_{X}(1,\Psi_{i}))),$$
noticing that the numbers $h_{X}(1,\psi)$ are algebraic integers so that $v_{p}(\tau(h_{X}(1,\Psi_{i}))) \ge 0$.  Since there exists $i \in \{1,\ldots,n_{0} \}$ such that
$$v_{p}(\tau(h_{X}(1,\Psi_{i}))) > 0$$
if and only if the reduction modulo $p$ of the generalized polynomial $g(T)$ has a root in $\mu_{\ell^{\infty}}(k_{\infty}) \smallsetminus \{1\}$, the result follows.
\end{proof}

\section{The number of primes dividing $\kappa_n$ in  a special case}\label{Sec:main_2}
In this section, we study the particular case where $\alpha(S) \subseteq \mathbb{Z}$ in more details.  The main difference between this situation and the more general one treated so far is that one can work with Laurent polynomials (or just polynomials after clearing the denominators) rather than generalized polynomials.  First note that the generalized polynomial $f(T)$ of Theorem \ref{main_1} satisfies
$$f(T) = f\left(\frac{1}{T} \right), $$
since the matrix $M(1/T)$ is the transpose of $M(T)$.  Therefore, under the assumption $\alpha(S) \subseteq \mathbb{Z}$, one has
$$f(T) = a_{-b}T^{-b} + a_{-(b-1)}T^{-(b-1)} +  \ldots + a_{0} + \ldots + a_{b-1}T^{b-1} + a_{b}T^{b}, $$
for some $b \in \mathbb{N}$ and some $a_{i} \in \mathbb{Z}$ satisfying $a_{i} = a_{-i}$ for all $i = 0,1,\ldots,b$.  We assume without lost of generality that $a_{b} \neq 0$.  It follows that
$$T^{b} \cdot f(T) \in \mathbb{Z}[T] $$
is a palindromic polynomial of even degree $2b$.  Set
$$U(T) = T^{b} \cdot f(T) \in \mathbb{Z}[T].$$
Since $a_{b} \neq 0$, one has $U(0)=a_{b} \neq 0$.
\begin{proposition} \label{when_stabilize}
With the same notation as in Theorem \ref{main_1}, assume that we are in the situation where $\mu = 0$ and that $\alpha(S) \subseteq \mathbb{Z}$.  Let $f_{i}$ be the inertia index of $p$ in $\mathbb{Q}(\zeta_{\ell^{i}})$ and let $r_{i}$ be the number of prime ideals lying above $p$ in $\mathbb{Q}(\zeta_{\ell^{i}})$.  We let $\bar{U}(T) \in \mathbb{F}_{p}[T]$ denote the reduction of $U(T)$ modulo $p$.  If $n_{1}$ is the smallest positive integer such that
$$f_{n_{1}} > {\rm deg}(\bar{U}(T)), $$
then there exists an integer $\nu$ such that 
$${\rm ord}_{p}(\kappa_{n}) = \nu,$$
when $n \ge n_{1}$.
\end{proposition}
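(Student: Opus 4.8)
The plan is to reduce everything to Theorem~\ref{main_1}. Since $\mu=0$, that theorem already guarantees a threshold $n_0$ beyond which $\ord_p(\kappa_n)$ equals a constant $\nu$, and moreover $g(T)=f(T)$ because $f=p^0\cdot g$. Thus it suffices to prove the single inequality $n_0\le n_1$, i.e.\ that $\bar f(T)=\bar g(T)$ has no primitive $\ell^i$-th root of unity in $\mu_{\ell^\infty}(k_\infty)$ as a zero once $i\ge n_1$. The first step is to transfer the zero condition from $\bar f$ to $\bar U$: because $U(T)=T^b f(T)$ and every $\xi\in\mu_{\ell^\infty}(k_\infty)$ is a unit, one has $\bar U(\xi)=\xi^b\,\bar f(\xi)$ with $\xi^b\ne 0$, so the zeros of $\bar f$ and of $\bar U$ in $\mu_{\ell^\infty}(k_\infty)$ coincide. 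Note also that $\mu=0$ forces $\bar f\ne 0$, hence $\bar U\ne 0$, so degrees behave as expected.

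Second, I would invoke the arithmetic of the residue fields from \S\ref{Sec:main_0}. A primitive $\ell^i$-th root of unity $\xi\in k_\infty$ has reduction lying in $k_i$ and in fact generates $k_i$ over $\Fp$, since $\Qp(\xi_{\ell^i})/\Qp$ is unramified with residue field $k_i$ of degree $f_i$ over $\Fp$. Consequently the minimal polynomial of such a $\xi$ over $\Fp$ has degree exactly $f_i$. If $\xi$ were a zero of $\bar U\in\Fp[T]\setminus\{0\}$, this minimal polynomial would divide $\bar U$, forcing $f_i\le\deg(\bar U)$. Taking the contrapositive: whenever $f_i>\deg(\bar U)$, no primitive $\ell^i$-th root of unity can be a zero of $\bar U$, and hence none can be a zero of $\bar f=\bar g$.

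Third, I would record that the sequence $(f_i)_{i\ge 1}$ is non-decreasing. Indeed, the tower $\Fp\subseteq k_1\subseteq k_2\subseteq\cdots$ of residue fields gives $f_i=[k_i:\Fp]\mid[k_{i+1}:\Fp]=f_{i+1}$, so in particular $f_i\le f_{i+1}$. Therefore $f_i\ge f_{n_1}>\deg(\bar U)$ for every $i\ge n_1$, and combining with the previous paragraph, $\bar g$ has no primitive $\ell^i$-th root of unity among its zeros for any $i\ge n_1$. By the very definition of $n_0$ in Theorem~\ref{main_1}, this gives $n_0\le n_1$, and then the formula of that theorem (with $\mu=0$) yields $\ord_p(\kappa_n)=\nu$ for all $n\ge n_1$.

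The computations here are light; the one point requiring care is the bookkeeping around $\bar U$. I expect the main obstacle to be making the degree comparison rigorous: on one hand $\deg(\bar U)$ may be strictly smaller than $2b$ if $p\mid a_b$, and on the other hand I must ensure that passing between zeros of $\bar f$ and of $\bar U$ via the unit factor $\xi^b$, and the identification $k_i=\Fp(\xi)$, hold simultaneously for all $i$. Granting the standard fact that $f_i$ equals the multiplicative order of $p$ modulo $\ell^i$ (whence both its monotonicity and the divisibility $f_i\mid f_{i+1}$), the remainder is immediate.
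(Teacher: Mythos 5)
Your proof is correct and follows essentially the same route as the paper: both arguments hinge on observing that a primitive $\ell^{i}$-th root of unity in $k_\infty$ has minimal polynomial of degree $f_i$ over $\mathbb{F}_p$, so that being a zero of $\bar U\neq 0$ would force $f_i\le\deg(\bar U)$, and then on the monotonicity of $(f_i)$ to rule out all levels $i\ge n_1$. The only (harmless) difference is that you package the conclusion as the inequality $n_0\le n_1$ and cite Theorem~\ref{main_1}, whereas the paper re-runs the valuation computation $\ord_{\mathfrak{p}_i}(h_X(1,\Psi_i))=0$ directly.
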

\begin{proof}
First, note that the reduction $\bar{\Phi}_{\ell^{i}}(T)$ modulo $p$ of the $\ell^{i}$-th cyclotomic polynomial factors into a product of $r_{i}$ irreducible polynomials of degree $f_{i}$.  If $\xi$ is a primitive $\ell^{i}$-th root of unity in $\mathbb{Q}_{p}(\xi_{\ell^{\infty}})$ such that
$$U(\xi) \equiv 0 \pmod{\mathfrak{m}_{\infty}}, $$
then one would have $V(T) \, | \, \bar{U}(T)$, where $V(T)$ denotes the minimal polynomial of $\xi + \mathfrak{m}_{\infty}$ over $\mathbb{F}_{p}$.  Thus, we would have
$$f_{i} = {\rm deg}(V(T)) \le {\rm deg}(\bar{U}(T)). $$
The sequence $(f_{i})_{i=1}^{\infty}$ being increasing, we get that if $i \ge n_{1}$, then
$$U(\xi) \not \equiv 0 \pmod{\mathfrak{m}_{\infty}} $$
for any primitive $\ell^{i}$-th root of unity whenever $i \ge n_{1}$.  Just as in the proof of Theorem \ref{main_1}, it follows that
$${\rm ord}_{\mathfrak{p}_{i}}(h_{X}(1,\Psi_{i})) = 0, $$
when $i \ge n_{1}$ and the result follows.
\end{proof}
Recall that every prime $p$ finitely decompose in $\mathbb{Q}(\zeta_{\ell^{\infty}})$.  From some point on, there will be a constant number $r$ of prime ideals lying above $p$ in $\mathbb{Q}(\zeta_{\ell^{i}})$.  Since $r_{i} \le r$ for all $i \in \mathbb{N}$, we obtain that ${\rm ord}_{p}(\kappa_{n})$ stabilizes once
\begin{equation}\label{level_stabilize}
n > {\rm log}_{\ell}\left(\frac{r \ell}{\ell-1} {\rm deg}(\bar{U}(T))\right),
\end{equation}
provided $\mu = 0$.  Here, $\log_\ell$ denotes the real logarithmic function in base $\ell$.
\begin{remark}
One could also say something about when the stabilization happens in the case where $\alpha(S) \not \subseteq \mathbb{Z}$ (and $\mu=0$) by extracting the relevant information from the proof of \cite[Theorem $2.2$]{Sinnott:1987}.  We leave this to the interested reader.
\end{remark}
One has that $1$ is a root of $U(T)$, since $M(1)$ is the Laplacian matrix of $X$ which is singular.  Let $m$ be the multiplicity of the root $1$.  Recall that if $V_{i}(T) \in \mathbb{Z}[T]$ are polynomials for $i=1,2,3$ and 
$$V_{3}(T) = V_{1}(T) \cdot V_{2}(T), $$
then if $V_{1}(T)$ and $V_{2}(T)$ are palindromic, so is $V_{3}(T)$.  Furthermore, if $V_{3}(T)$ and either $V_{1}(T)$ or $V_{2}(T)$ is palindromic, then so is the other.  Therefore, if we let
$$U_{1}(T) = \frac{U(T)}{(T-1)^{m}}, $$
then $U_{1}(T)$ is also a palindromic polynomial.  

Let $\alpha$ be a root of $U_{1}(T)$, if any.  It is an algebraic number and consider the number field $K = \mathbb{Q}(\alpha)$.  Recall that if $\beta$ and $\gamma$ are algebraic integers in $K$, then one writes $(\beta,\gamma)=1$ if $(\beta) + (\gamma) = O_{K}$, where $O_{K}$ is the ring of integers of $K$, and one says that $\beta$ and $\gamma$ are relatively prime.  Since $O_{K}$ is a Dedekind domain, the condition $(\beta,\gamma)=1$ is equivalent to saying that there is no prime ideal of $O_{K}$ that appears in both the factorization of $(\beta)$ and $(\gamma)$.  Since $K$ is the fraction field of $O_{K}$, there exist $\beta, \gamma \in O_{K}$ such that $\alpha = \beta/\gamma$.  Consider now the ideal $(\beta,\gamma)$.  If it is principal, say $(\beta,\gamma) = (\delta)$ for some $\delta \in O_{K}$, then we have $\beta = \beta_{1} \delta$ and $\gamma = \gamma_{1} \delta$ for some $\beta_{1},\gamma_{1} \in O_{K}$.  One has also $\alpha = \beta_{1}/\gamma_{1}$, but now $(\beta_{1},\gamma_{1}) = 1$.  If $(\beta,\gamma)$ is not principal, then it might not be possible to write $\alpha$ as the quotient of two relatively prime algebraic integers in $O_{K}$.  On the other hand, there exists a finite extension $L/K$ for which the ideal $(\beta,\gamma)$ becomes principal.  (The Hilbert class field would do for instance.)  Therefore, in $L$, it is possible to write $\alpha$ as a quotient of two algebraic integers in $O_{L}$ that are relatively prime to one another.

We are now ready to prove Theorem~\ref{thmB} stated in the introduction:
\begin{theorem} \label{number_of_primes}
Let $\omega$ be the usual prime omega function giving the number of distinct prime factors of a natural number.  With the same notation as above,
$$\lim_{n \to \infty}\omega(\kappa_{n}) = \infty, $$
if and only if $U(T)$ has a root in $\overline{\mathbb{Q}}$ that is not a root of unity.
\end{theorem}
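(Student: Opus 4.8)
### Proof plan

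The plan is to establish both directions by relating the number of distinct prime factors of $\kappa_n$ to the factorization of the values $U(\xi)$ as $\xi$ ranges over $\ell^i$-th roots of unity, and then invoke Schinzel's theorem to detect whether these values force infinitely many primes to appear. The starting point is equation (\ref{starting_point}), which expresses $\ell^n \cdot \kappa_n$ as $\kappa_X \cdot \prod_{i=1}^n h_X(1,\Psi_i)$. Using the identity $\tau(h_X(1,\psi_i^c)) = f(\xi_{\ell^i}^c)$ from the proof of Theorem~\ref{main_1} together with $U(T) = T^b \cdot f(T)$, each factor $h_X(1,\Psi_i)$ is, up to a unit, a product over $\sigma \in (\mathbb{Z}/\ell^i\mathbb{Z})^\times$ of the values $U(\xi_{\ell^i}^\sigma)$. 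Since these Galois-conjugate values have a common norm down to $\mathbb{Q}$, the relevant quantity is essentially $N_{\mathbb{Q}(\zeta_{\ell^i})/\mathbb{Q}}(U(\zeta_{\ell^i}))$, i.e. a product of resultant-type integers $\mathrm{Res}(\Phi_{\ell^i}(T), U(T))$ up to sign and $\ell$-power factors.

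For the \emph{if} direction, suppose $U(T)$ (equivalently $U_1(T)$) has a root $\alpha \in \overline{\mathbb{Q}}$ that is not a root of unity. Writing $\alpha = \beta/\gamma$ with $\beta,\gamma \in O_L$ relatively prime, as arranged in the discussion preceding the theorem, the integers $U(\zeta_{\ell^i})$ are governed by quantities of the shape $\prod_{\text{conjugates}}(\zeta_{\ell^i}\gamma - \beta)$ or, after taking norms, by $\prod_{i} (\alpha^{\ell^i} - 1)$-type expressions. This is exactly the setting of Schinzel's theorem \cite{Schinzel:1974}: for an algebraic number $\alpha$ that is not a root of unity (and not zero), the number of distinct primes dividing the numerators of $\alpha^n - 1$ tends to infinity. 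I would apply Schinzel's result to conclude that the set of rational primes dividing $\prod_i U(\zeta_{\ell^i})$, and hence dividing $\kappa_n$ via (\ref{starting_point}), is unbounded. The only care needed is to pass from the $L$-integral factorization back to rational primes dividing $\kappa_n$: a prime of $O_L$ dividing $\alpha^{\ell^i}-1$ lies over a rational prime dividing the norm, so distinct primes upstairs produce (cofinitely) distinct primes downstairs.

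For the \emph{only if} direction, suppose every root of $U(T)$ is a root of unity. Then $U_1(T)$ is a product of cyclotomic polynomials (being monic up to leading coefficient, palindromic, with all roots on the unit circle and integral, hence a Kronecker polynomial), so $U(T) = a_b \cdot (T-1)^m \cdot \prod_j \Phi_{d_j}(T)$ for finitely many $d_j$. The value $U(\zeta_{\ell^i})$ is then a product of factors $\Phi_{d_j}(\zeta_{\ell^i})$, each of which is either a unit or, when $d_j$ is a prime power, a factor whose norm is a fixed power of a single prime (namely $\ell$ or the prime under $d_j$). Concretely, $N_{\mathbb{Q}(\zeta_{\ell^i})/\mathbb{Q}}(\Phi_{d}(\zeta_{\ell^i}))$ can be computed from the standard evaluation $\Phi_d(1)$ being $q$ if $d$ is a power of a prime $q$ and $1$ otherwise; thus only the fixed, finite set of primes $\{a_b \text{'s prime factors}\} \cup \{\ell\} \cup \{q_j\}$ can ever divide $\kappa_n$. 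Therefore $\omega(\kappa_n)$ is bounded, which is the contrapositive of what we want.

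The main obstacle I anticipate is the precise bookkeeping in the \emph{if} direction: translating Schinzel's statement (about primes dividing $\alpha^n - 1$ for $\alpha$ a fixed algebraic number) into a statement about the rational primes dividing $\kappa_n$, through the chain $\kappa_n \rightsquigarrow \prod_i U(\zeta_{\ell^i}) \rightsquigarrow \prod_i N(\zeta_{\ell^i}\gamma - \beta)$. One must check that the factor $T^b$, the leading coefficient $a_b$, the cyclotomic factor $(T-1)^m$, and the $\ell$-power contributions are all harmless (they contribute only finitely many primes), so that the unboundedness is genuinely inherited from the non-cyclotomic root $\alpha$. Handling the possibility that $\alpha$ is not an algebraic integer — which is exactly why the denominators $\gamma$ and the passage to $O_L$ were set up in the preceding discussion — is the delicate point, and Schinzel's theorem must be applied in the form valid for arbitrary non-zero algebraic numbers that are not roots of unity.
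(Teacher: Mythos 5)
Your proposal is correct and follows essentially the same route as the paper: both directions rest on writing the non-root-of-unity root as $\alpha=\beta/\gamma$ with $(\beta,\gamma)=1$ in $O_{L}$ and applying Schinzel's primitive-divisor theorem to $\beta^{\ell^{i}}-\gamma^{\ell^{i}}$, while roots of unity (and the leading coefficient, the factor $(T-1)^{m}$, and $\ell$-power contributions) account for only finitely many primes. The only difference is bookkeeping: where you pass through norms/resultants of $U(\zeta_{\ell^{i}})$, the paper reduces $\alpha$ modulo a primitive prime divisor to obtain a root of $\bar{U}$ in $\mu_{\ell^{\infty}}(\overline{\mathbb{F}}_{p})\smallsetminus\{1\}$ and then invokes Corollary~\ref{nec_suf_cond}, which is also the cleanest way to sidestep the non-integrality of the other roots appearing in your per-root factorization of the resultant.
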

\begin{proof}
Assume first that $U(T)$ has a root that is not a root of unity, say $\alpha$.  For any abelian cover $Y/X$ of multigraphs, we have $\kappa_{X} \, | \, \kappa_{Y}$ by \cite[Corollary $4.15$]{Baker/Norine:2009}.  (See also \cite[Corollary $4.10$]{Hammer:2020}.)  Now, note that there are only finitely many rational primes $p$ such that $\mu_{p} \neq 0$, since a prime $p$ for which $\mu_{p} \neq 0$ necessarily divides the content of $U(T)$.  There are also finitely many rational primes $p$ dividing $\kappa_{X}$.  In order to show the claim it suffices, by Corollary \ref{nec_suf_cond}, to show that there are infinitely many rational primes $p$ for which $\mu_{p} = 0$ and for which the reduction of $U(T)$ modulo $p$ has a root in $\mu_{\ell^{\infty}}(\overline{\mathbb{F}}_{p}) \smallsetminus \{1\} \subseteq \mu_{\ell^{\infty}}(k_{\infty}) \smallsetminus \{ 1\}$.  Consider the number field $K = \mathbb{Q}(\alpha)$, and let $L$ be a finite extension of $K$ in which $\alpha$ can be written as $\beta/\gamma$ for two algebraic integers in $O_{L}$ satisfying $(\beta,\gamma)=1$.  We are interested in the prime ideals $\mathfrak{p}$ of $L$ for which $(\beta  \gamma,\mathfrak{p}) = 1$ and 
\begin{equation} \label{cong}
\beta^{\ell^{i}} \equiv \gamma^{\ell^{i}} \pmod{\mathfrak{p}}, 
\end{equation}
for some $i \ge 1$.  Indeed, recall that for such a prime ideal $\mathfrak{p}$, we have a well-defined group morphism $\rho:L_{(\mathfrak{p})}^{\times} \longrightarrow \kappa(\mathfrak{p})^{\times}$, where $L_{(\mathfrak{p})}^{\times}$ denotes the subgroup of $L^{\times}$ consisting of elements relatively prime with $\mathfrak{p}$ and $\kappa(\mathfrak{p})$ denotes the residue field of $L$ at $\mathfrak{p}$.  One has $\rho(\alpha) = \bar{\beta} \cdot \bar{\gamma}^{-1},$
where the bar denotes reduction modulo $\mathfrak{p}$.  The congruence (\ref{cong}) implies that $\rho(\alpha) \in \mu_{\ell^{\infty}}(\overline{\mathbb{F}}_{p})\smallsetminus \{1\}$ ($p$ is the rational prime lying below $\mathfrak{p}$), and
$$ \bar{g}(\rho(\alpha)) = \bar{f}(\rho(\alpha)) =  \rho(\alpha)^{-b} \cdot\bar{U}(\rho(\alpha)) = \bar{0}. $$
Let $\delta_{i} = \beta^{\ell^{i}}-\gamma^{\ell^{i}}$ and consider
$${\rm Supp}(\delta_{i}) = \{ \mathfrak{p} \, : \, \mathfrak{p} \, | \, \delta_{i}\}.$$
Let also
$$m_{i} = |{\rm Supp}(\delta_{i}) \smallsetminus {\rm Supp}(\delta_{0})|.$$
Note that ${\rm Supp}(\delta_{i}) \subseteq {\rm Supp}(\delta_{j})$ whenever $i \le j$, and that the sequence $(m_{i})_{i=1}^{\infty}$ is increasing.  Since by assumption, $\alpha$ is not a root of unity and $(\beta,\gamma) = 1$, a theorem of Schinzel (see \cite[Theorem $1$]{Schinzel:1974}) generalizing Zsigmondy's theorem to number fields implies that $m_{i} \to \infty$ as $i \to \infty$.  Since there is at most $[L:\QQ]$ primes lying above a fixed $p$, we do get that $\omega(\kappa_{n}) \to \infty$ as $n \to \infty$.

Conversely, if all the roots, say $\alpha_{1},\ldots, \alpha_{2b}$ of $U(T)$ are roots of unity, let $L$ be a cyclotomic number field containing all of the roots.  The numbers
$$\alpha_{j}^{\ell^{i}}-1 $$
are divisible by at most finitely many prime ideals in $L$ as $j$ runs over $\{1,\ldots,2b \}$ and $i \in \mathbb{N}$.  The result then follows.
\end{proof}

In the next section, we give some explicit examples where Theorem~\ref{number_of_primes} applies. In particular, we shall see in Examples \ref{example_finite_1} and \ref{example_finite}  that $\omega(\kappa_n)$ are bounded as $n\rightarrow \infty$, whereas $\displaystyle\lim_{n\rightarrow \infty}\omega(\kappa_n)=\infty$ in Examples \ref{ex:unbounded1}, \ref{ex:unbounded2} and \ref{ex:unbounded3}.

\section{Examples}\label{Sec:Ex}
The simplest situation is when the base multigraph is a bouquet $B_{t}$, in which case $S = \{s_{1},\ldots,s_{t}\}$ denotes the loops of $B_{t}$, and/or when $\alpha(S) \subseteq \mathbb{Z}$.  The numbers of spanning trees in this section have been calculated with the software \cite{SAGE}.
\begin{Ex}
Let $\ell = 5$.  \label{example_finite_1}We take $X = B_{3}$ the bouquet with three loops, and we let $\alpha$ be defined by $\alpha(s_{j}) = 1$ for all $j=1,2,3$.  We have an abelian $5$-tower over $B_{3}$
$$
B_{3} \longleftarrow X(\mathbb{Z}/5\mathbb{Z},S,\alpha_{1}) \longleftarrow X(\mathbb{Z}/5^{2}\mathbb{Z},S,\alpha_{2}) \longleftarrow \ldots \longleftarrow X(\mathbb{Z}/5^{n}\mathbb{Z},S,\alpha_{n}) \longleftarrow \ldots
$$
The power series $Q(T)$ of \cite[Theorem $6.1$]{McGown/Vallieres:2022a} can be calculated to be
$$Q(T) = -3T^{2} -3T^{3} -3T^{4} -3T^{5}- \ldots \in \mathbb{Z}\llbracket T\rrbracket \subseteq \mathbb{Z}_{5}\llbracket T \rrbracket, $$
so that $\mu_{5} = 0$ and $\lambda_{5} = 1$.  One has
$${\rm ord}_{5}(\kappa_{n}) = n ,$$
for all $n \ge 1$.  (This also follows from \cite[Corollary $5.7$]{Vallieres:2021}.)  We have now
$$f(T) = 6 -3(T+T^{-1}) = -3T^{-1} + 6 -3T \text{ and } U(T) = T\cdot f(T)  = - 3(T-1)^{2}. $$
Therefore, $\mu_{3} = 1$ and $\mu_{p}=0$ for all $p \neq 3$.  Furthermore, Corollary \ref{prime_showing} shows that for $n \ge 1$ we have
$${\rm ord}_{3}(\kappa_{n}) = 5^{n} - 1, $$
and Corollary \ref{nec_suf_cond} shows that $p \nmid \kappa_{n}$ for all primes $p \neq 3,5$.  Using SageMath, we calculate
$$\kappa_{0} = 1, \kappa_{1} = 3^{4} \cdot 5, \kappa_{2} = 3^{24} \cdot 5^{2}, \kappa_{3} = 3^{124} \cdot 5^{3} , \kappa_{4} = 3^{624} \cdot 5^{4}, \ldots$$

\end{Ex}
\begin{Ex}\label{ex:unbounded1} Let $\ell=3$.  We take $X=B_{4}$ the bouquet with four loops, and we let $\alpha$ be defined by $\alpha(s_{j}) = 1$ for $j=1,2$ and $\alpha(s_{j}) = 2$ for $j=3,4$.  We have an abelian $3$-tower over $B_{4}$
$$
B_{4} \longleftarrow X(\mathbb{Z}/3\mathbb{Z},S,\alpha_{1}) \longleftarrow X(\mathbb{Z}/3^{2}\mathbb{Z},S,\alpha_{2}) \longleftarrow \ldots \longleftarrow X(\mathbb{Z}/3^{n}\mathbb{Z},S,\alpha_{n}) \longleftarrow \ldots
$$
The power series $Q(T)$ of \cite[Theorem $6.1$]{McGown/Vallieres:2022a} can be calculated to be
$$Q(T) = -10T^{2} - 10T^{3} - 12T^{4} -14T^{5} \ldots \in \mathbb{Z}\llbracket T\rrbracket \subseteq \mathbb{Z}_{3}\llbracket T \rrbracket, $$
so that $\mu_{3} = 0$ and $\lambda_{3} = 1$.  One has
$${\rm ord}_{3}(\kappa_{n}) = n,$$
for all $n \ge 1$.  (This also follows from \cite[Corollary $5.7$]{Vallieres:2021}.)  We have now
\begin{equation*}
\begin{aligned}
f(T) &= 8 - 2(T + T^{-1}) - 2(T^{2}+T^{-2}) \\
&= -2T^{-2} - 2T^{-1} + 8 -2T - 2T^{2},
\end{aligned}
\end{equation*}
and
$$U(T) = T^{2} \cdot f(T) = -2(T-1)^{2}(1+3T+T^{2}).$$
Therefore, $\mu_{2}=1$ and $\mu_{p} =0$ for all $p \neq 2$.    Furthermore, Corollary \ref{prime_showing} shows that ${\rm ord}_{2}(\kappa_{n})$ will be unbounded whereas ${\rm ord}_{p}(\kappa_{n})$ will remain bounded when $p \neq 2,3$.  Using SageMath, we calculate
\begin{equation*}
\begin{aligned}
\kappa_{0} &= 1\\
\kappa_{1} &= 2^{4} \cdot 3\\
\kappa_{2} &= 2^{10} \cdot 3^{2} \cdot 17^{2}\\
\kappa_{3} &= 2^{28} \cdot 3^{3} \cdot 17^{2} \cdot 53^{2} \cdot 109^{2}\\
\kappa_{4} &= 2^{82} \cdot 3^{4} \cdot 17^{2} \cdot 53^{2} \cdot 109^{2} \cdot 2269^{2} \cdot 4373^{2} \cdot 19441^{2}\\
&\hspace{4cm} \vdots 
\end{aligned}
\end{equation*}
Since $\Phi_{3}(T) \equiv 1+3T+T^{2} \pmod{2}$, we have $n_{0}=2$ in Theorem \ref{main_1} when $p=2$.  Thus
$${\rm ord}_{2}(\kappa_{n}) = 3^{n} + 1,$$
an equality which is true for all $n \ge 1$.  

Eventually, there are $3$ primes lying above $17$ in $\mathbb{Q}(\zeta_{\ell^{i}})$ as $i \to \infty$.  Using Proposition \ref{when_stabilize}, one has
$${\rm ord}_{17}(\kappa_{n})=2 $$
for all $n \ge 2$.  The other primes showing up can be studied similarly.  For instance eventually there are $9$ primes lying above $53$ in $\mathbb{Q}(\zeta_{\ell^{i}})$ as $i \to \infty$ and one has ${\rm ord}_{53}(\kappa_{n})=2$ for all $n \ge 3$.  Also, eventually there are $18$ primes lying above $109$ in $\mathbb{Q}(\zeta_{\ell^{i}})$ as $i \to \infty$ and ${\rm ord}_{109}(\kappa_{n})=2$ for all $n \ge 3$.

The roots of $1 + 3T + T^{2}$ are given by
$$\frac{-3 \pm \sqrt{5}}{2}, $$
which are algebraic integers that are not roots of unity.  Therefore, Theorem \ref{number_of_primes} applies, and we have $\omega(\kappa_{n}) \to \infty$ as $n \to \infty$.

\end{Ex}

\begin{Ex}
Let $\ell=5$.  \label{example_finite}We take the multigraph $X$ consisting of two vertices $\{ v_{1},v_{2}\}$ and three parallel edges.  We take a section $S$ by directing the edge $s_{1}$ from $v_{1}$ to $v_{2}$ and the edges $s_{2},s_{3}$ from $v_{2}$ to $v_{1}$.  We let $\alpha$ be defined by $\alpha(s_{1}) = 1$ and $\alpha(s_{2}) = \alpha(s_{3})=2$.  We have an abelian $5$-tower of connected multigraphs over $X$:
$$
X \longleftarrow X(\mathbb{Z}/5\mathbb{Z},S,\alpha_{1}) \longleftarrow X(\mathbb{Z}/5^{2}\mathbb{Z},S,\alpha_{2}) \longleftarrow \ldots \longleftarrow X(\mathbb{Z}/5^{n}\mathbb{Z},S,\alpha_{n}) \longleftarrow \ldots
$$
The power series $Q(T)$ of \cite[Theorem $6.1$]{McGown/Vallieres:2022a} can be calculated to be
$$Q(T) = -18T^{2} - 18T^{3}-30T^{4}-42T^{5}- \ldots \in \mathbb{Z}\llbracket T\rrbracket \subseteq \mathbb{Z}_{5}\llbracket T \rrbracket, $$
so that $\mu_{5} = 0$ and $\lambda_{5} = 1$.  One has
$${\rm ord}_{5}(\kappa_{n}) = n,$$
for all $n \ge 1$.  We calculate now
\begin{equation*}
M(T) = 
\begin{pmatrix}
3 & -(T + 2T^{-2}) \\
-(T^{-1}+2T^{2}) & 3
\end{pmatrix},
\end{equation*}
and
$$f(T) = -2T^{-3} + 4 - 2T^{3}.$$
Thus
$$U(T) =  T^{3} \cdot f(T) = - 2(T-1)^{2}\Phi_{3}(T)^{2}.$$
Therefore, $\mu_{p} = 0$ for all $p \neq 2$ and it follows that ${\rm ord}_{p}(\kappa_{n})$ will eventually stabilize for all $p \neq 2,5$.  Furthermore, Theorem \ref{number_of_primes} shows that $\omega(\kappa_{n})$ is bounded as $n \to \infty$ and Corollary \ref{nec_suf_cond} shows that at most the rational primes $2,3$ and $5$ will eventually divide $\kappa_{n}$.  Corollary \ref{prime_showing} shows further that we have
$${\rm ord}_{2}(\kappa_{n}) = 5^{n}-1.$$
Using SageMath, we calculate
\begin{equation*}
\begin{aligned}
\kappa_{0} &= 3\\
\kappa_{1} &= 2^{4} \cdot 3 \cdot 5\\
\kappa_{2} &= 2^{24} \cdot 3 \cdot 5^{2} \\
\kappa_{3} &= 2^{124} \cdot 3 \cdot  5^{3}\\
\kappa_{4} &= 2^{624} \cdot 3 \cdot 5^{4} \\
&\hspace{1cm} \vdots 
\end{aligned}
\end{equation*}
 
\end{Ex} 

\begin{Ex} \label{ex:unbounded2}Let $\ell=3$.  We take $X=B_{4}$ the bouquet with four loops, and we let $\alpha$ be defined by $\alpha(s_{1}) = 1$, and $\alpha(s_{i}) = 2$ for $i=2,3,4$.  We have an abelian $3$-tower over $B_{4}$
$$
B_{4} \longleftarrow X(\mathbb{Z}/3\mathbb{Z},S,\alpha_{1}) \longleftarrow X(\mathbb{Z}/3^{2}\mathbb{Z},S,\alpha_{2}) \longleftarrow \ldots \longleftarrow X(\mathbb{Z}/3^{n}\mathbb{Z},S,\alpha_{n}) \longleftarrow \ldots
$$
The power series $Q(T)$ of \cite[Theorem $6.1$]{McGown/Vallieres:2022a} can be calculated to be
$$Q(T) = -13T^{2} -13T^{3} - 16T^{4} -19T^{5} \ldots  \in \mathbb{Z}\llbracket T\rrbracket \subseteq \mathbb{Z}_{3}\llbracket T \rrbracket, $$
so that $\mu_{3} = 0$ and $\lambda_{3} = 1$.  One has
$${\rm ord}_{3}(\kappa_{n}) = n,$$
for all $n \ge 1$.  (This also follows from \cite[Corollary $5.7$]{Vallieres:2021}.)  We calculate now
\begin{equation*}
\begin{aligned}
f(T) &= 8 - (T + T^{-1}) - 3(T^{2} + T^{-2}) \\
&= -3T^{-2} - T^{-1} + 8 - T - 3T^{2},
\end{aligned}
\end{equation*}
and
$$U(T) = T^{2} \cdot f(T) = -(T-1)^{2}(3 + 7T + 3T^{2}).$$
Therefore, $\mu_{p} = 0$ for all $p \neq 3$ and it follows that ${\rm ord}_{p}(\kappa_{n})$ will eventually stabilize for all $p \neq 3$.  Using SageMath, we calculate
\begin{equation*}
\begin{aligned}
\kappa_{0} &= 1\\
\kappa_{1} &= 2^{4} \cdot 3\\
\kappa_{2} &= 2^{4} \cdot 3^{2} \cdot 127^{2}\\
\kappa_{3} &= 2^{4} \cdot 3^{3} \cdot 127^{2} \cdot 3295783^{2}\\
\kappa_{4} &= 2^{4} \cdot 3^{4} \cdot 127^{2} \cdot 1621^{2} \cdot 3295783^{2} \cdot 22480434859526947^{2}\\
&\hspace{4cm} \vdots 
\end{aligned}
\end{equation*}
A root of $3 + 7T + 3T^{2}$ is given by
$$\alpha = \frac{-7 + \sqrt{13}}{6} \in \mathbb{Q}(\sqrt{13}),$$
which can be written as $\alpha = \beta/\gamma$, where
$$\beta = -\left( \frac{5+\sqrt{13}}{2}\right) \text{ and } \gamma = 4 + \sqrt{13}.$$ 
Note that $\beta$ and $\gamma$ are algebraic integers in $\mathbb{Q}(\sqrt{13})$ that satisfy $(\beta,\gamma)=1$, and $\alpha$ is not a root of unity.  Therefore, Theorem \ref{number_of_primes} applies, and we have $\omega(\kappa_{n}) \to \infty$ as $n \to \infty$

\end{Ex}

\begin{Ex}\label{ex:unbounded3} Let $\ell=2$.  We take the multigraph $X$ consisting of two vertices $\{ v_{1},v_{2}\}$ and four parallel edges.  We take a section $S$ by directing all of the edges from $v_{1}$ to $v_{2}$, and we let $\alpha$ be defined by $\alpha(s_{i}) = i$ for $i=1,2,3,4$.  We have an abelian $2$-tower of connected multigraphs over $X$:
$$
X \longleftarrow X(\mathbb{Z}/2\mathbb{Z},S,\alpha_{1}) \longleftarrow X(\mathbb{Z}/2^{2}\mathbb{Z},S,\alpha_{2}) \longleftarrow \ldots \longleftarrow X(\mathbb{Z}/2^{n}\mathbb{Z},S,\alpha_{n}) \longleftarrow \ldots
$$
The power series $Q(T)$ of \cite[Theorem $6.1$]{McGown/Vallieres:2022a} can be calculated to be
$$Q(T) = -20T^{2} - 20T^{3} - 28T^{4} -36T^{5} - 45T^{6} - \ldots \in \mathbb{Z}\llbracket T\rrbracket \subseteq \mathbb{Z}_{2}\llbracket T \rrbracket, $$
so that $\mu_{2} = 0$ and $\lambda_{2} = 5$.  One has
$${\rm ord}_{2}(\kappa_{n}) = 5n + 2,$$
for all $n \ge 2$.  We calculate now
\begin{equation*}
M(T) = 
\begin{pmatrix}
4 & -(T + T^{2} + T^{3} + T^{4}) \\
-(T^{-1}+T^{-2} +T^{-3} +T^{-4}) & 4
\end{pmatrix},
\end{equation*}
and
$$f(T) = -T^{-3} - 2T^{-2} - 3T^{-1} + 12 - 3T - 2T^{2} - T^{3}.$$
Thus
$$U(T) =  T^{3} \cdot f(T) = - (T-1)^{2}(1+4T+10T^{2} + 4T^{3} + T^{4}).$$
Therefore, $\mu_{p} = 0$ for all $p \neq 2$ and it follows that ${\rm ord}_{p}(\kappa_{n})$ will eventually stabilize for all $p \ge 3$.  Using SageMath, we calculate
\begin{equation*}
\begin{aligned}
\kappa_{0} &= 2^{2}\\
\kappa_{1} &= 2^{5}\\
\kappa_{2} &= 2^{12} \\
\kappa_{3} &= 2^{17} \cdot 17^{2}\\
\kappa_{4} &= 2^{22} \cdot 17^{2} \cdot 1217^{2} \\
\kappa_{5} &= 2^{27} \cdot 17^{2} \cdot 257^{2} \cdot 1217^{2} \cdot 23041^{2}\\
\kappa_{6} &= 2^{32} \cdot 17^{2} \cdot 257^{2} \cdot 1217^{2} \cdot 23041^{2} \cdot 158209^{2} \cdot 886538753^{2}\\
&\hspace{4cm} \vdots 
\end{aligned}
\end{equation*}
The roots of the polynomial $1+4T+10T^{2} + 4T^{3} + T^{4}$ are not roots of unity and it follows from Theorem \ref{number_of_primes} that $\omega(\kappa_{n}) \to \infty$ as $n \to \infty$.\end{Ex}
\begin{Ex} Let $\ell=2$.  We take $X = B_{2}$ the bouquet with two loops, and we let $\alpha$ be defined by $\alpha(s_{1}) = \sqrt{17} = 1.0010\ldots \in \mathbb{Z}_{2}$, and $\alpha(s_{2}) = 5$.  We have an abelian $2$-tower of connected multigraphs over $B_{2}$:
$$
B_{2} \longleftarrow X(\mathbb{Z}/2\mathbb{Z},S,\alpha_{1}) \longleftarrow X(\mathbb{Z}/2^{2}\mathbb{Z},S,\alpha_{2}) \longleftarrow \ldots \longleftarrow X(\mathbb{Z}/2^{n}\mathbb{Z},S,\alpha_{n}) \longleftarrow \ldots
$$
The power series $Q(T) \in \mathbb{Z}_{2}\llbracket T \rrbracket$ of \cite[Theorem $6.1$]{McGown/Vallieres:2022a} can be calculated to be
$$(0.1101\ldots) T^{2} + (0.1101\ldots) T^{3} + (0.0011\ldots) T^{4} + (0.1011\ldots)T^{5} + (1.1101\ldots)T^{6}+ \ldots, $$
so that $\mu_{2} = 0$ and $\lambda_{2} = 5$.  One has
$${\rm ord}_{2}(\kappa_{n}) = 5n -3,$$
for all $n \ge 3$.  We have now
$$f(T) = 4 - (T^{\sqrt{17}} + T^{-\sqrt{17}}) - (T^{5} + T^{-5}) \in \mathbb{Z}[T;\mathbb{Z}_{2}].$$
Therefore, $\mu_{p} = 0$ for all $p \neq 2$ and it follows that ${\rm ord}_{p}(\kappa_{n})$ will eventually stabilize for all $p \ge 3$.  Using SageMath, we calculate
\begin{equation*}
\begin{aligned}
\kappa_{0} &= 1\\
\kappa_{1} &= 2^{2}\\
\kappa_{2} &= 2^{5} \\
\kappa_{3} &= 2^{12}\\
\kappa_{4} &= 2^{17} \cdot 17^{2} \\
\kappa_{5} &= 2^{22} \cdot 17^{2} \cdot 1217^{2}\\
\kappa_{6} &= 2^{27} \cdot 17^{2} \cdot 257^{2} \cdot 1217^{2} \cdot 23041^{2}\\
\kappa_{7} &= 2^{32} \cdot 17^{2} \cdot 257^{4} \cdot 1217^{2} \cdot 23041^{2} \cdot 1518337^{2} \cdot 27744257^{2}\\
&\hspace{4cm} \vdots 
\end{aligned}
\end{equation*}
Since $\alpha(S)$ is not contained in $\ZZ$, Theorem~\ref{number_of_primes} does not apply. However, our numerical evidence seems to suggest that $\omega(\kappa_n)$ is unbounded as $n\rightarrow\infty$. 
\end{Ex}

\subsection*{Statement on conflict of interest}
On behalf of all authors, the corresponding author states that there is no conflict of interest. 

\subsection*{Data availability statement}
Data sharing not applicable to this article as no datasets were generated or analysed during the current study.
\bibliographystyle{plain}
\bibliography{main}

\end{document}